\newtheorem{assumption}{Assumption}
\newtheorem{condition}{Condition}
\newcommand{\inlinenorm}[1]{ \Vert #1 \Vert}
\newcommand{\norm}[1]{\left\Vert #1 \right\Vert}
\newcommand{\bigpar}[1]{\left( #1 \right)}
\DeclareMathOperator{\gra}{grad}
\DeclareMathOperator{\low}{lower}
\DeclareMathOperator{\upp}{upper}
\DeclareMathOperator{\true}{true}
\DeclareMathOperator{\offo}{OFFO}
\begin{document}

\title{
  A Novel First-order Method with Event-driven Objective Evaluations
}

\titlerunning{Economical Objective Evaluations}        

\author{Christian Varner         \and
        Vivak Patel 
}

\authorrunning{Varner and Patel} 

\institute{C. Varner \at
              UW Madison Department of Statistics \\
              1220 Medical Sciences Center\\
              1300 University Ave\\
              Madison, WI 53706\\
              \email{cvarner@wisc.edu}           
           \and
           V. Patel \at
              UW Madison Department of Statistics \\
              1220 Medical Sciences Center\\
              1300 University Ave\\
              Madison, WI 53706\\
              \email{vivak.patel@wisc.edu}
}

\date{Received: date / Accepted: date}

\maketitle

\begin{abstract}
Arising in semi-parametric statistics,
control applications, and as subproblems in global optimization methods,
certain optimization problems can have objective functions requiring 
numerical integration to evaluate, 
yet gradient function evaluations that are relatively cheap.
For such problems, typical optimization methods that require multiple
evaluations of the objective for each new iterate become computationally expensive. 
In light of this, optimization methods that avoid objective function evaluations 
are attractive, yet  we show anti-convergence behavior for these methods on 
the problem class of interest.
To address this gap, 
we develop a novel gradient algorithm that only evaluates the objective function 
when specific events are triggered and propose a step size scheme that greedily 
takes advantage of the properties induced by these triggering events. 
We prove that our methodology has global convergence guarantees under 
the most general smoothness conditions, 
and show through extensive numerical results that our method performs
favorably on optimization problems arising in semi-parametric statistics.

\keywords{Local Lipschitz Smoothness \and Adaptive Step Scheme \and Novel Line Search Strategy
\and First-Order Gradient Method}
\subclass{90C30 \and 65K05 \and 68T09}
\end{abstract}

\section{Introduction}         
\label{sec:intro}
In semi-parametric statistics \cite{wedderburn1974quasilikelihood,mccullagh1989glm}, 
engineering and optimal control applications \cite{lloyd2020using,Labbe2024gibo},
and global optimization schemes \cite{Labbe2024gibo}, optimization problems can take the form
\begin{equation} \label{eq-integral-objective-function}
    \min_{\theta \in \mathbb{R}^n} F(\theta) = 
    \min_{\theta \in \mathbb{R}^n} \sum_{i=1}^m \int_{\mathcal{S}_i(\theta)} f_i(x) dx,
\end{equation}
where $f_i: \mathbb{R}^p \to \mathbb{R}$ is a known (possibly) non-convex function
and $\mathcal{S}_i:\mathbb{R}^n \to \mathbb{R}^p$ is a known set-valued map for $i=1,\ldots,m$.
In the aforementioned application areas, 
the optimization problem's objective function, $F(\theta)$, 
typically does not have a closed form and must be evaluated by expensive numerical 
integration schemes; yet, the gradient can be evaluated relatively cheaply by
the fundamental theorem of calculus.   

As a simple, contrived example, globally optimizing the function 
\begin{equation}
    f_1(\theta) = (1 + \theta^8)^{1/2} + (2 - \theta^6)^{1/3},
\end{equation}
using \cite[Algorithm 2]{Labbe2024gibo}, results in an optimization problem of 
the form in (\ref{eq-integral-objective-function}),
\begin{equation} \label{eq-simple-example}
    \min_{\theta} \int_{\theta-w}^{\theta+w} f_1(x) dx,
\end{equation}
where $w > 0$. 
The resulting optimization problem's objective function does not have a closed 
form anti-derivative \cite[Page 10-11]{yadav2012}; 
yet, the gradient function has a closed form of $f_1(\theta+w) - f_1(\theta-w)$.
In other words, the objective function is expensive to evaluate, while the gradient 
function is relatively cheap to evaluate. 
Real examples of optimization problems from statistics that also satisfy this computational 
pattern will be given in Section \ref{sec:experiments}.

For such optimization problems, objective-evaluation heavy optimization methods can
be computationally expensive (e.g., line-search-type methods), 
making objective-function-free-optimization (OFFO) algorithms attractive alternatives.
OFFO methods \cite{grapiglia2022Adaptive,gratton2022Convergence,malitsky2023adaptive,zhou2024AdaBB,oikonomidis2024Adaptive,wang23convergenceadagrad} 
dispense with objective function evaluations by introducing (adaptive) step size schemes, 
while also enjoying global convergence guarantees under a variety of conditions,
such as convexity and variations on Lipschitz smoothness (see Table \ref{table-offo-examples}).
However, these conditions are not satisfied by our simple example (\ref{eq-simple-example})
nor those we consider in Section \ref{sec:experiments}, which are instead 
\emph{non-convex and locally Lipschitz smooth}.

\begin{table}[ht]
    \centering
    \caption{OFFO methods and references. 
    Convexity indicates if the convergence theory for the
    algorithm requires convexity of the objective.
    Smoothness indicates the continuity assumptions on the function, gradient, and/or
    hessian.
    We use (GLO) for globally Lipschitz continuity (see Definition 1 \cite{patel2024recentadvancesnonconvexsmoothness}),  
    ($L_0, L_1, \rho$) for $\rho-$order Lipschitz continuity (see Definition 3 \cite{li2023Convex}),
    (LOC) for locally Lipschitz continuity (see Definition \ref{def-locally-lip}), 
    ($\alpha-$H\"{O}L) for $\alpha$ locally H\"{o}lder continuity (see Section 2.1 \cite{oikonomidis2024Adaptive}),
    and $(r, \ell)$ to indicate generalized $\ell-$continuity (See Definition 2 \cite{li2023Convex}). 
    We will prepend F-, G-, H-, and p- to the codes above to indicate whether the function,
    gradient, hessian, or $p^{th}$ derivative satisfies these smoothness conditions, 
    respectively.}
    \label{table-offo-examples}
    \footnotesize
    \begin{tabular}{p{2in} c c} \toprule
    \textbf{Method} & \textbf{Convexity} & \textbf{Smoothness}  \\ \midrule
      Constant Step Size \cite{li2023Convex} & Yes & G-$(r, \ell)$\\
      Nesterov's Accelerated Method \cite{li2023Convex} & Yes & G-$(L_0, L_1, \rho),~\rho \in [0, 2)$\\
      Barzilai-Borwein Methods \cite{barzilai1988Twopoint} & 
      Yes\tablefootnote{Only for strongly convex quadratics \cite{raydan1993,dai2002RLinear,burdakov2019Stabilized,zhou2024AdaBB}.} & G-(GLO) \\
      Adaptive Gradient w/o Descent \cite{malitsky2023adaptive,malitsky2020Adaptive} & Yes & G-(LOC) \\
      Adaptive Gradient w/o Descent \cite{oikonomidis2024Adaptive} & Yes & G-($\alpha$-H\"{O}L) \\
      Polyak Step Size \cite{vankov2024optimizingl0l1smoothfunctions} & Yes & G-$(L_0, L_1, 1)$ \\ \midrule
      Diminishing Step Size \cite{patel2024Gradient} & No & G-(LOC)\tablefootnote{Under a bounded iterate assumption \cite{patel2024Gradient}.}\\
      Constant Step Size \cite{li2023Convex} & No & G-$(L_0, L_1, \rho),~\rho \in [0, 2)$\\
      Negative Curvature Method \cite{curtis2019Exploiting} & No & G-(GLO), H-(GLO)\\
    Gradient Clipping \cite{zhang2020firstorder,zhang2020Improved,vankov2024optimizingl0l1smoothfunctions} & No & G-$(L_0, L_1, 1)$ \\
      Normalized Gradient Descent \cite{hubler2023parameteragnosticoptimizationrelaxedsmoothness,vankov2024optimizingl0l1smoothfunctions} & No & G-$(L_0, L_1, 1)$ \\
      Normalized Gradient Descent \cite{chen2023generalized} & No & G-$(L_0, L_1, \rho),~\rho \in [0,1]$ \\
      Weighted Gradient-Norm Damping \cite{wu2020Wngrad} & No & G-(GLO) \\
      Adaptively Scaled Trust Region \cite{grapiglia2022Adaptive} & No & G-(GLO) \\
      Adaptively Scaled Trust Region \cite{gratton2022Convergence} & No & p-(GLO)
   \\ \bottomrule
    \end{tabular}
\end{table}

A natural question is: can OFFO methods be extended to the non-convex and 
locally Lipschitz smooth case, and can they be successful in \emph{minimizing} objective
functions in this class?
By building on a string of anti-convergence results 
\cite{vavasis1993blackbox,cartis2022Evaluationa,patel2024Gradient,varner2024Challenges},
we construct generic, non-convex and locally Lipschitz smooth objective functions 
on which OFFO methods will generate a sequence of diverging iterates for which 
the optimality gap diverges (see Section \ref{sec:counter}).
Furthermore, using our construction, we show an extensive list of OFFO methods 
also fail to drive
the norm gradient to $0$ (see Table \ref{table-counter-examples-divergence} in Section
\ref{sec:counter}). This strongly affirms that
such methods \emph{do not} readily extend to these examples.

To summarize, our problem class of interest has a computational pattern that 
is unfavorable to objective-evaluation heavy methods, and where OFFO methods lack
convergence guarantees.
We address this gap by developing a novel optimization method that 
adaptively determines when to evaluate the objective function to ensure 
economical objective function evaluations; is guaranteed 
to keep the optimality gap bounded thereby avoiding the pitfalls of OFFO methods; 
and enjoys global convergence guarantees under general conditions. 
We accomplish this by structuring our algorithm around three novel elements: 
first, we operationalize the theoretical tool of triggering events to signal 
when to check the objective \cite{patel2021Stochastic,patel2022Global,patel2024Gradient};
second, we introduce an evolution of Armijo's line search \cite{armijo1966Minimization} 
to generalize across multiple gradient iterations;
third, we develop an adaptive step size using the two previous elements, 
alleviating the necessity for frequent tuning.
In Section \ref{sec:theory}, we show our method is globally convergent under
realistic assumption of our problem class.
In Section \ref{sec:convergence-rate}, we develop global and local rate-of-convergence 
results under several scenarios. 
In Section \ref{sec:experiments}, we show that our method is computational practical
and reliable against alternatives on instances of quasi-likelihood estimation 
\cite{wedderburn1974quasilikelihood}.

The paper is organized as follows: 
in section \ref{sec:problem-formulation}, we formally introduce our setting;
in section \ref{sec:counter}, we show our anti-convergence results for OFFO algorithms;
in section \ref{sec:algorithm}, we detail our novel algorithm;
in section \ref{sec:theory}, we perform a global convergence analysis;
in section \ref{sec:convergence-rate}, we perform rate-of-convergence analyses;
in section \ref{sec:experiments}, we discuss numerical experiments;
and, in section \ref{sec:conclusion}, we conclude.

\section{Problem Formulation} \label{sec:problem-formulation}
While we are motivated by problems of the form (\ref{eq-integral-objective-function}), 
we consider a broader class of optimization problems in the development and analysis 
of our procedure.
Namely, we consider 
\begin{equation} \label{eqn-problem}
    \min_{\theta\in\mathbb{R}^n} F(\theta),
\end{equation}
where $F(\theta)$ satisfies the following two assumptions.
\begin{assumption} 
\label{as:lower-bounded}
    The objective function, $F(\theta)$ is continuous and is lower bounded by $F^* > -\infty$.
\end{assumption}

\begin{assumption} 
\label{as:locally-lipschitz-cont}
    The gradient, $\dot F(\theta) = \nabla_\psi F(\psi) \vert_{\psi=\theta}$, 
    is locally Lipschitz continuous.
\end{assumption}
Recall, local Lipschitz continuity is defined as follows.
\begin{definition} \label{def-locally-lip}
    A continuous function $G : \mathbb{R}^n \to \mathbb{R}^d$ is locally Lipschitz continuous if, for every $\theta \in \mathbb{R}^n$, 
    there exists an open ball around $\theta$, denoted by $B(\theta)$,\footnote{This definition can be extended to arbitrary compact sets using sequential compactness.} 
    and there exists a constant $\mathcal{L}(B(\theta)) \geq 0$ such that
    \begin{equation}
        \inlinenorm{G(\theta_1) - G(\theta_2)}_2 \leq \mathcal{L}(B(\theta)) \inlinenorm{\theta_1 - \theta_2}_2,~\forall \theta_1, \theta_2 \in B(\theta).
    \end{equation}
\end{definition}
\begin{remark}
    For a differentiable function $F : \mathbb{R}^n \to \mathbb{R}$, by setting $G = \dot F$ 
    in Definition \ref{def-locally-lip} we arrive at the concept of a locally Lipschitz 
    continuous gradient, or a locally Lipschitz smooth objective.
    We refer the reader to \cite{patel2024recentadvancesnonconvexsmoothness} for a brief 
    overview of recent developments of smoothness conditions. 
\end{remark}

\section{Anti-Convergence Results for OFFO}
\label{sec:counter}
We now consider the worst-case behavior of OFFO methods
when $F(x)$ satisfies Assumptions \ref{as:lower-bounded} and \ref{as:locally-lipschitz-cont}.
The anti-convergence analysis%
\footnote{A related line of work is in tight-lower-bound analyses, which show that a
method will remain away from a stationary point for continuously differentiable 
functions for a certain amount of time \cite{vavasis1993blackbox,cartis2022Evaluationa}. 
Anti-convergence results take analyses further by considering
the stricter class of functions which result in optimality gap divergence as well.}
of OFFO methods traces back to the well known example of applying constant
step size gradient descent to quadratic functions with step sizes larger than 
a multiple of the inverse Lipschitz constant \cite[Exercise 1.2.6b]{bertsekas1999}.
Despite this rich history, anti-convergence analyses for OFFO methods are difficult, 
as evidenced by the elusive construction of a provable divergence example for the 
Barzilai-Borwein method.%
\footnote{To the best of our knowledge, our recent work constructed such 
an example \cite{varner2024Challenges}.}
A recent advancement 
showed that gradient descent with diminishing step sizes applied to a carefully 
constructed objective function exhibits anti-convergence: 
the optimality gap diverges while the gradient norm remains bounded from zero 
\cite[\S 4]{patel2024Gradient}.
Here, we provide a generalization of these results that can be applied broadly to the
class of OFFO methods.

\subsection{Mathematical Model for OFFO Methods} \label{subsec:offo-model}

\begin{table}[ht] 
    \centering
    \caption{
        Optimization algorithms that satisfy Condition \ref{condition-output-same}, and the sequences $\{d_k : k + 1 \in \mathbb{N}\}$ and 
        $\{\theta_k : k + 1 \in \mathbb{N}\}$ such that Condition \ref{condition-iterate-diverging} is satisfied.
        The only algorithm in the list below with $\psi_k \not = \theta_k$ is Nesterov's Acceleration Method \cite{chen2023generalized}, therefore
        we abstain from adding a column (see \cite{varner2024Challenges} for details).
        It is assumed the empty sum is $0$, and that $\theta_0 = 0$ for all the algorithm listed below.
        All extra symbols beyond the ones explicitly given are parameters of the algorithm that the user must specify and remain constant throughout the algorithm.
    }
    \label{table-counter-examples-divergence}
    \footnotesize
    \begin{tabular}{p{2in} c c} \toprule
    \textbf{Method} & $-d_k$ & $\theta_k$ \\ \midrule
        Diminishing Step Size \cite{patel2024Gradient}\tablefootnote{Result first appeared in 
        \cite{patel2024Gradient}.} & $1$ & $\sum_{i=0}^{k-1}\alpha_k$ \\
        Constant Step Size \cite{li2023Convex} & $1$ & $k\alpha$ \\
        Nesterov's Acceleration Method \cite{li2023Convex} & $1$ & \cite[Eq. 3.12]{varner2024Challenges}\\
        Barzilai-Borwein Methods \cite{barzilai1988Twopoint} & $(1/2)^k$ & $\alpha_0 k$ \\
        Adaptive Gradient w/o Descent \cite{malitsky2020Adaptive} & $\left( \sqrt{5}/(\sqrt{5} - 1)\right)^{k}$ & $\alpha_0 \sum_{j = 0}^{k-1} (\sqrt{5}/2)^{j}$ \\
        Normalized Gradient Descent \cite{chen2023generalized} & $1$ & $k\alpha$ \\
        Weighted Gradient-Norm Damping \cite{wu2020Wngrad} & $1$ & \cite[Prop. 3.10]{varner2024Challenges}\\
        Adaptively Scaled Trust Region \cite{gratton2022firstorder} & $1$ & $\sum_{j=0}^{k-1} (\xi + j + 1)^{\mu}$\\
        $(L_0, L_1)-$Step Size \cite[Eq. 3.2]{vankov2024optimizingl0l1smoothfunctions} & $1$ & $\frac{k \log(1 + (L_1/(L_0 + L_1)))}{L_1}$\\
        $(L_0, L_1)-$Step Size \cite[Eq. 3.5]{vankov2024optimizingl0l1smoothfunctions} & $1$ & $\frac{k}{L_0 + (3/2)L_1}$ \\
        $(L_0, L_1)-$Step Size \cite[Eq. 3.6]{vankov2024optimizingl0l1smoothfunctions} & $1$ & $k\min\{1/(2L_0), 1/(3L_1)\}$
    \\ \bottomrule
    \end{tabular}
\end{table}

We introduce a model of OFFO algorithms in the \textit{one-dimensional parameter case}
(i.e., $n=1$).%
\footnote{We plan to extend this model to multi-dimensional parameters in 
future work.}
Given 
$\theta \in \mathbb{R}$ (current iterate), 
$\psi \in \mathbb{R}$ (ancillary iterate, see examples), 
$\mu \in \mathbb{R}^m$ (state variables, see examples), 
$d \in \mathbb{R}$ (gradient information), 
$k+1 \in \mathbb{N}$,
we model the update of an OFFO method by 
\begin{equation}\label{eq-deterministic-offo-method}
    \mathcal{A}_{\offo}(k, \theta, \psi, \mu, d) = \theta_{+}, \psi_{+}, \mu_{+},
\end{equation}%
where $\theta_{+} \in \mathbb{R}$ is the next iterate;
$\psi_{+} \in \mathbb{R}$ is the next ancillary iterate; 
and $\mathcal{\mu}_{+} \in \mathbb{R}^m$ are updated state variables.
To contextualize this, we provide three examples of common algorithms written in 
this framework.
\begin{example}[Gradient Descent with Diminishing Step Size \cite{bertsekas1999}]
    Gradient descent with diminishing step sizes generates iterates, $\lbrace 
    \theta_k : k+1 \in \mathbb{N} \rbrace$, using a
    pre-specified step size sequence $\{\mu_k : k + 1 \in \mathbb{N}\}$, 
    and the recursion $\theta_{k+1} = \theta_k - \mu_k \dot F(\theta_k)$. 
    In terms of (\ref{eq-deterministic-offo-method}),
    the method's update procedure is 
    \begin{equation}
        \mathcal{A}^{\mathrm{diminishing}}_{\offo}(k, \theta, \theta, \mu_k, \dot F(\theta))
        = \theta - \mu_k \dot F(\theta), \theta - \mu_k \dot F(\theta), \mu_{k+1}.
    \end{equation}
\end{example}

\begin{example}[WNGrad \cite{wu2020Wngrad}]
    The WNGrad algorithm from \cite{wu2020Wngrad} generates iterates, $\lbrace 
    \theta_k : k+1 \in \mathbb{N} \rbrace$, and step sizes, $\lbrace \mu_{k}: 
    k+1 \in \mathbb{N} \rbrace$ by 
    \begin{equation}
        \begin{cases}
            \theta_{k+1} &= \theta_k - \frac{1}{\mu_k} \dot{F}(\theta_k) \\
            \mu_{k+1} &= \mu_k + \frac{1}{\mu_k}\norm{\dot F(\theta_k)}_2^2.
        \end{cases}
    \end{equation}
    In terms of (\ref{eq-deterministic-offo-method}), 
    the method's update procedure is 
    \begin{align}
    &\mathcal{A}^{\mathrm{WNGrad}}_{\offo}(k, \theta, \theta, \mu, \dot F(\theta))
    = \theta - \frac{1}{\mu} \dot F(\theta), 
    \theta - \frac{1}{\mu} \dot F(\theta), 
    \mu + \frac{\inlinenorm{\dot F(\theta)}_2^2}{\mu}.
\end{align}
\end{example}

\begin{example}[Barzilai-Borwein \cite{barzilai1988Twopoint}]
    The long form of the Barzilai-Borwein method generates iterates, 
    $\lbrace \theta_{k}: k+1 \in \mathbb{N} \rbrace$, and step sizes,
    $\lbrace \mu_k : k+1 \in \mathbb{N} \rbrace$, by 
    \begin{equation}
        \begin{cases}
            \theta_{k+1} &= \theta_k - \mu_k \dot F(\theta_k) \\ 
            \mu_{k+1} & = \frac{\norm{\theta_{k+1} - \theta_k}_2^2}{
                (\theta_{k+1} - \theta_{k})^\intercal (\dot F(\theta_{k+1}) - \dot F(\theta_k))
            } = \frac{\norm{\mu_k \dot F(\theta_k)}_2^2}{-\mu_k \dot F(\theta_k)^\intercal
            \left(\dot F(\theta_{k+1}) - \dot F(\theta_k)\right)}.
        \end{cases}
    \end{equation}
    In terms of \eqref{eq-deterministic-offo-method},
    \begin{equation}
        \begin{aligned} 
       &\mathcal{A}^{\mathrm{BB}}_{\offo}(k, \theta, \theta, \mu, \dot F(\theta))\\
       & = \theta - \mu \dot F(\theta), \theta - \mu \dot F(\theta), 
        \frac{\norm{\mu \dot F(\theta)}_2^2}{-\mu \dot F(\theta)^\intercal
        \left(\dot F(\theta - \mu \dot F(\theta)) - \dot F(\theta)\right)}.
        \end{aligned}
    \end{equation}
\end{example}

As the above examples show, we can readily capture OFFO methods using 
\eqref{eq-deterministic-offo-method}; 
however, we can capture many other methods using this model. 
To restrict our discussion to OFFO methods, we consider two conditions that we 
argue are reasonable for OFFO methods to satisfy, and which are satisfied by 
the OFFO methods listed in Table \ref{table-counter-examples-divergence}.
The first condition effectively precludes the use of objective function 
information.

\begin{condition} \label{condition-output-same}
    Let $\mathcal{A}_{\offo}$ be a model for an OFFO method.
    Let $\theta \in \mathbb{R}, \psi \in \mathbb{R}, \mu \in \mathbb{R}^m, k + 1 \in \mathbb{N}$. 
    For any $H_1, H_2 : \mathbb{R} \to \mathbb{R}$ such that $H_1(\psi) = H_2(\psi)$,
    \begin{equation}
        \mathcal{A}_{\offo}(k, \theta, \psi, \mu, H_1(\psi)) = 
        \mathcal{A}_{\offo}(k, \theta, \psi, \mu, H_2(\psi)).
    \end{equation}
\end{condition}

As motivation for the next condition, note that OFFO methods typically have guarantees 
for minimizing a convex objective function whose gradient function is globally 
Lipschitz continuous (with appropriately chosen parameters).%
\footnote{Nearly every method in Table \ref{table-counter-examples-divergence} has 
such a result, if not a more general result that does not require convexity.
A notable exception is the Barzilai-Borwein method for which it has only been shown
to globally converge for strictly convex quadratics \cite{burdakov2019Stabilized}.}
For instance, an OFFO method should be able to minimize a convex, quadratic 
function. As another example, an OFFO method should be able to minimize the 
following convex function whose derivative is globally Lipschitz continuous:
\begin{equation} \label{eq-function-for-cond2}
    F(\theta) = 
    \begin{cases}
        \exp(-\theta) & \theta > 0 \\
        0.5(\theta - 1)^2 + .5 & \theta \leq 0.
    \end{cases}
\end{equation}
For this example, the OFFO method will generate a diverging sequence of iterates, 
which, ideally, will be strictly increasing.\footnote{Indeed, if the method is
gradient related then the iterates will strictly increase.}
These properties are desirable in constructing our anti-convergence function, 
and in view of \eqref{eq-function-for-cond2} are not unreasonable. Therefore, we
encapsulate this behavior in our next condition below.

\begin{condition} \label{condition-iterate-diverging}
    Let $\mathcal{A}_{\offo}$ be a model for an OFFO algorithm.
    Let $\theta_0 \in \mathbb{R}, \psi_0 \in \mathbb{R}, \mu_0 \in \mathbb{R}^m$.
    There exists a sequence $\{d'_k : k + 1 \in \mathbb{N}\} \subset \mathbb{R}$, 
    for which $\{(\theta_k, \psi_k, \mu_k) : k + 1 \in \mathbb{N}\}$, defined by
    \begin{equation}
        \theta_{k+1}, \psi_{k+1}, \mu_{k+1} = \mathcal{A}_{\offo}(k, \theta_k, \psi_k, d'_k),
    \end{equation}
    has the property that $\{\theta_k : k + 1 \in \mathbb{N}\}$ and $\{\psi_k : k + 1 \in \mathbb{N}\}$
    are unbounded strictly increasing sequences.
\end{condition}

\subsection{Objective Function Construction}

We construct an objective function to demonstrate the anti-convergence behavior of 
$\mathcal{A}_{\offo}$.
We use a building block function defined on a finite 
interval for which the function's values and all one-sided 
derivatives can be controlled at the interval's end points, and which is 
lower bounded on the interval to satisfy Assumption \ref{as:lower-bounded}.
A first attempt at constructing such a function might consider polynomial 
interpolation (as used in \cite{vavasis1993blackbox,cartis2022Evaluationa}).
However, the polynomial interpolant quickly becomes unmanageable when we 
try to control all derivatives at the end points and ensure a lower bound. 

Extending a piece-wise function developed in \cite{patel2024Gradient}, 
we use the following building block function, which we term Frankenstein's 
function.
Let $m \in \mathbb{R}_{>0}$,
let $d, \delta \in \mathbb{R}$,
let $a = \max(|d|, 1)$ and $b = \max(|\delta|, 1)$.
With illustrations in Figure \ref{fig-component-function-examples}, define 

\begin{small}
\begin{align} \label{eq:component-functions}
    f(&\theta; m, d, \delta) = \\
    &\begin{cases}
        -d \theta & \theta \in (0, \frac{m}{16a}) \\
        \frac{8da}{m} (\theta - \frac{m}{8a})^2 - \frac{3m}{32}\frac{d}{a} & \theta \in [\frac{m}{16a}, \frac{m}{8a}) \\
        - \frac{3m}{32}\frac{d}{a} & \theta \in [\frac{m}{8a}, \frac{m}{8}] \\
        \frac{8}{m} (\theta-\frac{m}{8})^2 - \frac{3m}{32}\frac{d}{a} & \theta \in (\frac{m}{8}, \frac{3m}{16}) \\
        \frac{-5m}{16}\exp\left( \frac{5/16}{\theta/m - 1/2} + 1 \right) + \frac{11m}{32} - \frac{3md}{32a} & \theta \in [\frac{3m}{16}, \frac{m}{2})\\
        \frac{11m}{32} - \frac{3md}{32a} & \theta = \frac{m}{2} \\
        \frac{5m}{16}\exp\left(\frac{-5/16}{\theta/m - 1/2} + 1\right) + \frac{11m}{32} - \frac{3md}{32a} & \theta \in (\frac{m}{2}, \frac{13m}{16}) \\
        \frac{-8}{m}(\theta-\frac{7m}{8})^2 + \frac{22m}{32} - \frac{3md}{32a} & \theta \in [\frac{13m}{16}, \frac{14m}{16}) \\
        \frac{22m}{32} - \frac{3md}{32a} & \theta \in [\frac{14m}{16}, \frac{(16b-2)m}{16b}] \\
        \frac{-8\delta b}{m}(\theta - \frac{(16b-2)m}{16b})^2 + \frac{22m}{32} - \frac{3md}{32a}  & \theta \in (\frac{(16b-2)m}{16b}, \frac{(16b-1)m}{16b}) \\
        -\delta\theta + \frac{(32b-3)\delta m}{32b} + \frac{22m}{32} - \frac{3md}{32a} & \theta \in [\frac{(16b-1)m}{16b}, m].
    \end{cases}
\end{align}
\end{small}

\begin{figure}[h]
    \centering
\begin{tikzpicture}[domain=0:1,scale=3]
    \draw[very thin,color=gray] (-0.1,-.125) grid (.99,.99);

    \draw[->] (-0.1, 0) -- (1.1, 0) node[right] {$x$};
    \draw[->] (0, -0.125) -- (0, 1.1) node[above] {$f(\theta; 1, 1, 1)$}; 

    \draw[thick,domain=0:0.0625,color=blue] plot (\x, -\x);
    \draw[thick,domain=0.0625:0.125,color=purple] plot (\x, {8*(\x-(1/8))^2 - (3/32)});
    \draw[thick,domain=0.125:0.125,color=red] plot (\x, {-3/32});
    \draw[thick,domain=0.125:0.1875] plot (\x, {8*(\x-(1/8))^2-(3/32)});
    \draw[thick,domain=0.1875:0.499] plot (\x, {(-5/16)*exp((5/16)/(\x-.5) + 1) + (11/32) - (3/32)});
    \draw[thick,domain=0.499:0.5001] plot (\x, {(11/32) - (3/32)});
    \draw[thick,domain=0.5001:0.8125] plot (\x, {(5/16)*exp(-(5/16)/(\x-.5) + 1) + (11/32) - (3/32)});
    \draw[thick,domain=0.8125:0.875] plot (\x, {-8*(\x-(7/8))^2 + (22/32) - (3/32)});
    \draw[thick,domain=0.875:0.875,color=red] plot (\x, {(22/32)-(3/32)});
    \draw[thick,domain=0.875:0.9375,color=purple] plot (\x, {-8*(\x-(14/16))^2+(22/32) - (3/32)});
    \draw[thick,domain=0.9375:1,color=blue] plot (\x, {-\x + (29/32) + (22/32) - (3/32)});
\end{tikzpicture}
    \begin{tikzpicture}[domain=0:1,scale=3]
    \draw[very thin,color=gray] (-0.1,-.125) grid (.99,.99);

    \draw[->] (-0.1, 0) -- (1.1, 0) node[right] {$x$};
    \draw[->] (0, -0.125) -- (0, 1.1) node[above] {$f(\theta; 1, 2, 3)$};   

    \draw[thick,domain=0:0.03125,color=blue] plot (\x, -2 * \x);
    \draw[thick,domain=0.03125:0.0625,color=purple] plot (\x, {32*(\x-(1/16))^2 - (3/32)});
    \draw[thick,domain=0.0625:0.125,color=red] plot (\x, {-3/32});
    \draw[thick,domain=0.125:0.1875] plot (\x, {8*(\x-(1/8))^2-(3/32)});
    \draw[thick,domain=0.1875:0.499] plot (\x, {(-5/16)*exp((5/16)/(\x-.5) + 1) + (11/32) - (3/32)});
    \draw[thick,domain=0.499:0.5001] plot (\x, {(11/32) - (3/32)});
    \draw[thick,domain=0.5001:0.8125] plot (\x, {(5/16)*exp(-(5/16)/(\x-.5) + 1) + (11/32) - (3/32)});
    \draw[thick,domain=0.8125:0.875] plot (\x, {-8*(\x-(7/8))^2 + (22/32) - (3/32)});
    \draw[thick,domain=0.875:(46/48),color=red] plot (\x, {(22/32)-(3/32)});
    \draw[thick,domain=(46/48):(47/48),color=purple] plot (\x, {-8*3*3*(\x-(46/48))^2+(22/32) - (3/32)});
    \draw[thick,domain=(47/48):1,color=blue] plot (\x, {-3 * \x + ((32*3-3)*3/(32*3)) + (22/32) - (3/32)});
\end{tikzpicture}
    \begin{tikzpicture}[domain=0:1,scale=3]
    \draw[very thin,color=gray] (-0.1,-.125) grid (.99,.99);

    \draw[->] (-0.1, 0) -- (1.1, 0) node[right] {$x$};
    \draw[->] (0, -0.125) -- (0, 1.1) node[above] {$f(\theta; 1, -2, -3)$};   

    \draw[thick,domain=0:0.03125,color=blue] plot (\x, 2 * \x);
    \draw[thick,domain=0.03125:0.0625,color=purple] plot (\x, {8*(-2)*2*(\x-(1/16))^2 + (3/32)});
    \draw[thick,domain=0.0625:0.125,color=red] plot (\x, {3/32});
    \draw[thick,domain=0.125:0.1875] plot (\x, {8*(\x-(1/8))^2+(3/32)});
    \draw[thick,domain=0.1875:0.499] plot (\x, {(-5/16)*exp((5/16)/(\x-.5) + 1) + (11/32) + (3/32)});
    \draw[thick,domain=0.499:0.5001] plot (\x, {(11/32) + (3/32)});
    \draw[thick,domain=0.5001:0.8125] plot (\x, {(5/16)*exp(-(5/16)/(\x-.5) + 1) + (11/32) + (3/32)});
    \draw[thick,domain=0.8125:0.875] plot (\x, {-8*(\x-(7/8))^2 + (22/32) + (3/32)});
    \draw[thick,domain=0.875:(46/48),color=red] plot (\x, {(22/32)+(3/32)});
    \draw[thick,domain=(46/48):(47/48),color=purple] plot (\x, {-8*(-3)*3*(\x-(46/48))^2+(22/32) + (3/32)});
    \draw[thick,domain=(47/48):1,color=blue] plot (\x, {3 * \x + ((32*3-3)*(-3)/(32*3)) + (22/32) + (3/32)});
\end{tikzpicture}
    \begin{tikzpicture}[domain=0:1,scale=3]
    \draw[very thin,color=gray] (-0.1,-.125) grid (.99,.99);

    \draw[->] (-0.1, 0) -- (1.1, 0) node[right] {$x$};
    \draw[->] (0, -0.125) -- (0, 1.1) node[above] {$f(\theta; 1, 0.25, 0.75)$};   

    \draw[thick,domain=0:(1/16),color=blue] plot (\x, -0.25 * \x);
    \draw[thick,domain=(1/16):(1/8),color=purple] plot (\x, {8*(0.25)*(\x-(1/8))^2 - (0.25)*(3/32)});
    \draw[thick,domain=(1/8):(1/8),color=red] plot (\x, {-(0.25) * 3/32});
    \draw[thick,domain=(1/8):0.1875] plot (\x, {8*(\x-(1/8))^2-(0.25)*(3/32)});
    \draw[thick,domain=0.1875:0.499] plot (\x, {(-5/16)*exp((5/16)/(\x-.5) + 1) + (11/32) - (0.25)*(3/32)});
    \draw[thick,domain=0.499:0.5001] plot (\x, {(11/32) - (0.25) * (3/32)});
    \draw[thick,domain=0.5001:0.8125] plot (\x, {(5/16)*exp(-(5/16)/(\x-.5) + 1) + (11/32) - (0.25) * (3/32)});
    \draw[thick,domain=0.8125:0.875] plot (\x, {-8*(\x-(7/8))^2 + (22/32) - (0.25)*(3/32)});
    \draw[thick,domain=0.875:(14/16),color=red] plot (\x, {(22/32)-(0.25)*(3/32)});
    \draw[thick,domain=(14/16):(15/16),color=purple] plot (\x, {-8*(0.75)*(\x-(14/16))^2+(22/32) - (0.25)*(3/32)});
    \draw[thick,domain=(15/16):1,color=blue] plot (\x, {-0.75 * \x + ((32*1-3)*(0.75)/(32)) + (22/32) - (0.25)*(3/32)});
\end{tikzpicture}
    \caption{
        Examples of the component functions specified in  
        \eqref{eq:component-functions} with differing parameters of $d$ and $\delta$. 
        The parts that are highlighted correspond to 
        the first (blue), second (purple), third (red), 
        third to last (red), second to last (purple), and last (blue) parts of 
        \eqref{eq:component-functions}.
        The colored components are the sections of  
        \eqref{eq:component-functions}
        that were modified from previous works for
        Lemma \ref{prop-catastrophic-divergence}.
        Note when $d$ and $\delta$ are smaller than or equal to $1$, 
        the red components are gone;
        however, when either are greater than one the corresponding red component 
        is needed.
        \label{fig-component-function-examples}}
\end{figure}
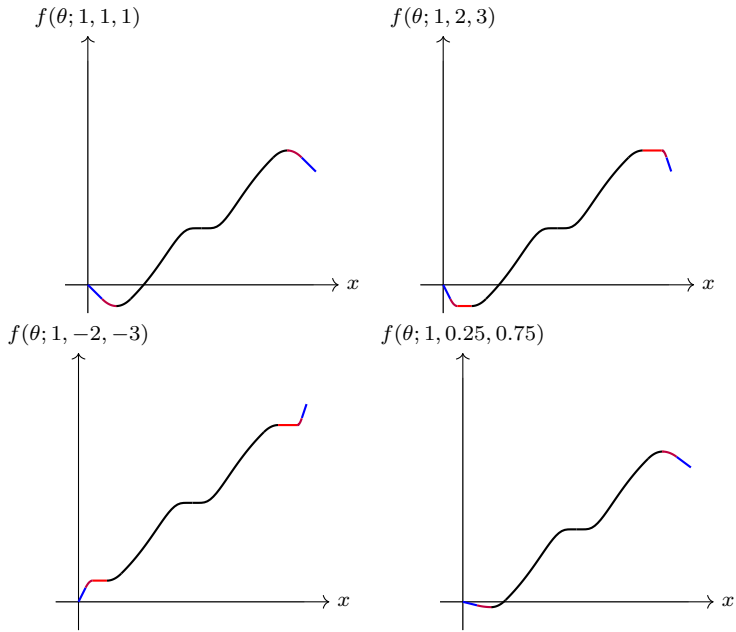

\begin{remark}
Compared to \cite{patel2024Gradient}, Frankenstein's function allows the derivative
at the two endpoints ($0$ and $m$) to be arbitrary real numbers in $\mathbb{R}$,
rather than being restricted to just $[0, 1]$.
As a result, we do not require further restrictions on our model of OFFO methods developed
in subsection \ref{subsec:offo-model}.
Additionally, it allows us to construct more interesting anti-convergence examples; 
in particular, we develop a new example (differing from our earlier technical report
\cite{varner2024Challenges}) 
for the algorithm of 
Malitsky and Mishchenko
\cite{malitsky2020Adaptive}.
\end{remark}

We now show that \eqref{eq:component-functions} satisfies Assumptions \ref{as:lower-bounded}
and \ref{as:locally-lipschitz-cont},%
\footnote{We consider differentiability at the end points to be one-sided derivatives.}
and its values and all derivatives at the end points are controlled. 
\begin{lemma}\label{lemma-component-function-properties}
    Let $m \in \mathbb{R}_{>0}$, $d \in \mathbb{R}$, and $\delta \in \mathbb{R}$.
    Let $a = \max(|d|, 1)$ and $b = \max(|\delta|, 1)$.
    Let $f(x; m, d, \delta) : (0, m] \to \mathbb{R}$ be defined in as in \eqref{eq:component-functions}.
    Then, the continuous extension of $f$ to $[0, m]$ is continuous;
    it is continuously differentiable on $(0, m)$; 
    its derivative is locally Lipschitz continuous on $(0,m)$;
    it admits one-sided derivatives of $-d$ and $-\delta$ at $0$ and $m$, respectively;
    it admits higher-order, one-sided derivatives of $0$ at $0$ and $m$; 
    $f(m; m, d, \delta) \geq m / 2$; and 
    $f(x; m, d, \delta) \geq -m / 8$.
\end{lemma}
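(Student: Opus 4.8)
The plan is to exploit the piecewise structure of \eqref{eq:component-functions}: on the interior of each of the defining subintervals, $f$ coincides with an affine, quadratic, or exponential expression that is manifestly $C^\infty$, so continuity, continuous differentiability, and local Lipschitz continuity of the derivative hold automatically there. All the content therefore lives at the finitely many knots separating consecutive pieces, namely $\frac{m}{16a}, \frac{m}{8a}, \frac{m}{8}, \frac{3m}{16}, \frac{m}{2}, \frac{13m}{16}, \frac{7m}{8}, \frac{(16b-2)m}{16b}, \frac{(16b-1)m}{16b}$, together with the endpoints $0$ and $m$. My first step is to verify continuity by checking that the left and right expressions agree in value at each knot; this is a finite list of algebraic identities, which I would organize in order from $0$ toward $m$.

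Next I would establish continuous differentiability on $(0,m)$ by matching one-sided first derivatives at each interior knot. Here the normalizations $a=\max(\abs{d},1)$ and $b=\max(\abs{\delta},1)$ do the essential work: they rescale the widths of the first and last quadratic ramps precisely so that each quadratic piece meets the adjacent affine or constant piece with matching slope (e.g. at $\frac{m}{16a}$ the ramp derivative equals $-d$, and at the plateaus the slope is $0$). The one genuinely non-elementary junction is $\theta=\frac{m}{2}$, where the two exponential bump pieces meet, and I expect this to be the main obstacle. I would use the standard fact that a function of the form $\exp(-c/\abs{u})$, with $u=\theta/m-\frac12$, together with all of its derivatives tends to $0$ as $u\to0$, so that both exponential pieces and all of their one-sided derivatives approach those of the constant value $\frac{11m}{32}-\frac{3md}{32a}$; this forces a $C^1$ (indeed $C^\infty$) gluing at $\frac{m}{2}$ with derivative $0$.

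For local Lipschitz continuity of $\dot f$ on $(0,m)$, I would observe that on each closed subinterval $f$ is $C^2$ with bounded second derivative (polynomials are smooth, and the exponential pieces have derivatives bounded on compacta by the decay just described), so $\dot f$ is Lipschitz on each piece; combined with continuity of $\dot f$ across the knots, this yields local Lipschitz continuity on all of $(0,m)$ in the sense of Definition \ref{def-locally-lip}. The endpoint claims are then immediate: the first and last pieces are affine, so $f$ has one-sided derivatives $-d$ at $0$ and $-\delta$ at $m$, and all higher one-sided derivatives vanish there.

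It remains to prove the two value bounds. For $f(m)\ge m/2$ I would substitute $\theta=m$ into the final affine piece and telescope the $\delta$-terms to obtain $f(m)=\frac{22m}{32}-\frac{3\delta m}{32b}-\frac{3md}{32a}$, then bound each correction using $\frac{\abs{d}}{a}\le1$ and $\frac{\abs{\delta}}{b}\le1$, which give $-\frac{3md}{32a}\ge-\frac{3m}{32}$ and $-\frac{3\delta m}{32b}\ge-\frac{3m}{32}$, hence $f(m)\ge\frac{16m}{32}=\frac{m}{2}$. The global lower bound $f\ge-m/8$ is the most laborious step: I would minimize $f$ on each subinterval separately. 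The second half of the domain carries the offset $\frac{22m}{32}$ and stays far above $-m/8$, so the binding pieces are the initial descent and the bottom plateau, whose values lie in $[-\frac{3m}{32},0]$ again by $\frac{\abs{d}}{a}\le1$; since $-\frac{3m}{32}>-\frac{m}{8}$, the bound holds with room to spare. The only care needed is to treat both signs of $d$ and $\delta$, and the degenerate regimes $\abs{d}\le1$, $\abs{\delta}\le1$ in which the plateaus collapse to single points, uniformly, which the normalization by $a$ and $b$ again makes transparent.
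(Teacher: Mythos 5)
Your proposal is correct and is essentially the same argument the paper relies on: the paper's own proof is a one-line deferral to the analogous Proposition 4.2 of \cite{patel2024Gradient}, whose content is precisely this kind of piece-by-piece verification (value and slope matching at the finitely many knots, the flat $C^\infty$ gluing at $\theta = m/2$ via the vanishing of all derivatives of $\exp(-c/\abs{u})$ as $u \to 0$, and the endpoint/lower bounds obtained from $\abs{d}/a \le 1$ and $\abs{\delta}/b \le 1$). Your write-up simply supplies in full the details the paper delegates to that reference, including the correct treatment of the degenerate plateaus when $\abs{d}\le 1$ or $\abs{\delta}\le 1$.
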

\begin{proof} 
    The proof is analogous to that of Proposition 4.2 in \cite{patel2024Gradient}.
    \qed
\end{proof}

We now show that their exists a function defined on $\mathbb{R}$ 
satisfying Assumptions \ref{as:lower-bounded} and \ref{as:locally-lipschitz-cont}, 
and, on an increasing sequence of points, $\{\phi_{k} : k + 1 \in \mathbb{N}\}$,
matches derivative values in a sequence $\{d_k : k + 1 \in \mathbb{N}\}$ by concatenation
of a series of Frankenstein's functions.
We provide an illustration of such an objective function in Figure \ref{fig-ce-example-main}.
\begin{lemma} \label{lemma-existence-of-objective}
    Let $\{\phi_j : j + 1 \in \mathbb{N}\} \subset \mathbb{R}$ be a
    strictly increasing unbounded sequence. 
    Let $\{d_j : j + 1 \in \mathbb{N}\} \subset \mathbb{R}$. 
    Then, there exists a locally Lipschitz smooth function,
    $F : \mathbb{R} \to \mathbb{R}$, depending on these sets such that
    $\dot F(\phi_j) = -d_j$,
    $F(\theta) \geq -1/8$, and 
    $F(\phi_j) \geq (\phi_j - \phi_0)/2$.
\end{lemma}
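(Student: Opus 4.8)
The plan is to build $F$ by concatenating vertically shifted copies of Frankenstein's function (Lemma \ref{lemma-component-function-properties}), one on each piece of a partition of $[\phi_0,\infty)$ that refines $\{\phi_j\}$ so that \emph{every} piece has width at most $1$. Concretely, for each $j$ I would insert $N_j = \lceil \phi_{j+1} - \phi_j \rceil$ equally spaced breakpoints $\phi_j = t_{j,0} < t_{j,1} < \cdots < t_{j,N_j} = \phi_{j+1}$, so each sub-interval $[t_{j,i}, t_{j,i+1}]$ has width $w_{j,i} \leq 1$. To the breakpoints I assign target slopes: $-d_j$ at every $\phi_j$, and $0$ at every interior breakpoint. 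On each sub-interval I place $f(\theta - t_{j,i}; w_{j,i}, D_{j,i}, D_{j,i+1})$, where $D_{j,i}, D_{j,i+1}$ are the negated target slopes at its two endpoints; by Lemma \ref{lemma-component-function-properties} this piece has left value $0$, left slope $-D_{j,i}$, right slope $-D_{j,i+1}$, and all higher one-sided derivatives equal to $0$ at both ends.

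Next I would glue the pieces. Fixing $F(\phi_0) = 0$ and choosing the vertical shift of each successive piece so that its left value agrees with the preceding piece's right value yields a continuous $F$ on $[\phi_0,\infty)$; since each piece has left value $0$, the shift on a given piece equals the accumulated sum of the right-end values $f(w;\cdot)$ of all earlier pieces. Because consecutive pieces share the same first derivative at each breakpoint (by the slope assignment) and all higher one-sided derivatives vanish there, $\dot F$ is continuous with a vanishing one-sided second derivative from both sides at every breakpoint; hence $\dot F$ is $C^1$ across breakpoints and, combined with the interior local Lipschitz continuity from Lemma \ref{lemma-component-function-properties}, $\dot F$ is locally Lipschitz on all of $[\phi_0,\infty)$. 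In particular $\dot F(\phi_j) = -d_j$.

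The two inequalities then follow from the quantitative bounds in Lemma \ref{lemma-component-function-properties}. Writing $V_j \defeq F(\phi_j)$, each piece contributes a rise $f(w_{j,i};\cdot) \geq w_{j,i}/2 \geq 0$, so the $V_j$ are nondecreasing and $V_j = \sum(\text{rises}) \geq \tfrac{1}{2}\sum w_{j,i} = \tfrac{1}{2}(\phi_j - \phi_0)$, which is the third property. For the lower bound, let $S$ be the value of $F$ at the left end of a given sub-interval; since all rises are nonnegative, $S \geq V_0 = 0$, and on that sub-interval $f \geq -w/8 \geq -1/8$ because $w \leq 1$, so $F \geq S - 1/8 \geq -1/8$ throughout. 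This is where the width-at-most-one refinement is essential, and it is the main obstacle to address: it is precisely what converts the scale-dependent bound $f \geq -m/8$ into the uniform bound $-1/8$, since without the refinement a single wide gap $\phi_{j+1}-\phi_j$ could drive $F$ arbitrarily far below $-1/8$.

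Finally I would extend $F$ to $(-\infty,\phi_0)$ while preserving every property: place one more Frankenstein piece of small width $m'$ on $[\phi_0 - m', \phi_0]$ transitioning from slope $0$ to $-d_0$ (parameters $d=0$, $\delta = d_0$), shifted so its right value is $0$, and set $F$ constant to the left of $\phi_0 - m'$. With $d=0$ the piece has no downward dip, and choosing $m'$ small makes its total rise, hence its depth below $0$, at most $1/8$, so $F \geq -1/8$ persists, $\dot F(\phi_0) = -d_0$ is matched, and $\dot F$ stays locally Lipschitz. The remaining work is the routine endpoint bookkeeping supplied by Lemma \ref{lemma-component-function-properties}, which I would relegate to direct computation.
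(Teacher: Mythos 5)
Your construction is correct, and its skeleton --- vertically shifted Frankenstein pieces glued so that the one-sided slopes agree and all higher one-sided derivatives vanish at each seam --- is exactly the concatenation scheme the paper has in mind (its proof simply defers to Proposition 4.3 of \cite{patel2024Gradient}; see Figure \ref{fig-ce-example-main}). The genuine difference is your width-at-most-one refinement with zero target slopes at the inserted breakpoints. The cited template places one Frankenstein piece per gap $[\phi_j, \phi_{j+1}]$, which is adequate in \cite{patel2024Gradient} because the gaps there are diminishing step sizes, hence uniformly bounded; but for an arbitrary unbounded strictly increasing $\lbrace \phi_j \rbrace$, one piece per gap only yields $F \geq F(\phi_j) - (\phi_{j+1}-\phi_j)/8$ on the $j$th gap, and since $F(\phi_j)$ grows at most linearly in $\phi_j - \phi_0$ while the gap $\phi_{j+1}-\phi_j$ can grow arbitrarily fast, the resulting function need not be bounded below by $-1/8$ --- indeed, with $d_j \equiv 1$ and gaps growing geometrically relative to $\phi_j - \phi_0$, it is not bounded below at all. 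Your refinement is precisely what converts the scale-dependent dip $-m/8$ into the uniform $-1/8$, and the growth estimate $F(\phi_j) \geq (\phi_j - \phi_0)/2$ survives it because every refined piece still rises by at least half its width. Two minor remarks: your explicit treatment of $(-\infty, \phi_0)$ (a short piece with zero left slope plus a constant tail) is a detail the paper's one-line proof leaves implicit but that the statement, which asks for $F$ on all of $\mathbb{R}$, does require; and your justification of local Lipschitz continuity of $\dot F$ across seams via vanishing one-sided second derivatives is slightly loose as stated (one-sided differentiability at a point does not by itself give a Lipschitz bound on a neighborhood), but it is harmless here, since each Frankenstein piece is affine near both of its endpoints, so $\dot F$ is in fact constant in a neighborhood of every seam.
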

\begin{proof}
    The proof is analogous to that of Proposition 4.3 in \cite{patel2024Gradient}.
    \qed
\end{proof}

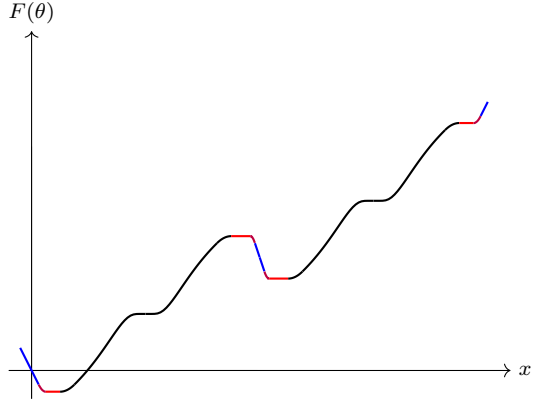
\begin{figure}
    \centering
    \begin{tikzpicture}[domain=0:1.5,scale=3]
    \draw[very thin,color=gray] (-0.1,-.125) grid (.99,.99);

    \draw[->] (-0.1, 0) -- (2.1, 0) node[right] {$x$};
    \draw[->] (0, -0.125) -- (0, 1.5) node[above] {$F(\theta)$}; 

    \draw[thick,domain=-0.05:0.03125,color=blue] plot (\x, -2 * \x);
    \draw[thick,domain=0.03125:0.0625,color=purple] plot (\x, {32*(\x-(1/16))^2 - (3/32)});
    \draw[thick,domain=0.0625:0.125,color=red] plot (\x, {-3/32});
    \draw[thick,domain=0.125:0.1875] plot (\x, {8*(\x-(1/8))^2-(3/32)});
    \draw[thick,domain=0.1875:0.499] plot (\x, {(-5/16)*exp((5/16)/(\x-.5) + 1) + (11/32) - (3/32)});
    \draw[thick,domain=0.499:0.5001] plot (\x, {(11/32) - (3/32)});
    \draw[thick,domain=0.5001:0.8125] plot (\x, {(5/16)*exp(-(5/16)/(\x-.5) + 1) + (11/32) - (3/32)});
    \draw[thick,domain=0.8125:0.875] plot (\x, {-8*(\x-(7/8))^2 + (22/32) - (3/32)});
    \draw[thick,domain=0.875:(46/48),color=red] plot (\x, {(22/32)-(3/32)});
    \draw[thick,domain=(46/48):(47/48),color=purple] plot (\x, {-8*3*3*(\x-(46/48))^2+(22/32) - (3/32)});
    \draw[thick,domain=(47/48):1,color=blue] plot (\x, {-3 * \x + ((32*3-3)*3/(32*3)) + (22/32) - (3/32)});
    
    \draw[thick,domain=1:{1+(1/48)},color=blue] plot (\x, {(-3)*(\x - 1) + .5});
    \draw[thick,domain={1+(1/48)}:{1+(1/24)},color=purple] plot (\x, {(8)*3*3*(\x-1-(1/(8*3)))^2 - (3/32)+ .5});
    \draw[thick,domain={1+(1/24)}:1.125,color=red] plot (\x, {-3/32+ .5});
    \draw[thick,domain=1.125:1.1875] plot (\x, {8*(\x-1-(1/8))^2-(3/32)+ .5});
    \draw[thick,domain=1.1875:1.499] plot (\x, {(-5/16)*exp((5/16)/(\x-1-.5) + 1) + (11/32) - (3/32)+ .5});
    \draw[thick,domain=1.499:1.5001] plot (\x, {(11/32) - (3/32)+ .5});
    \draw[thick,domain=1.5001:1.8125] plot (\x, {(5/16)*exp(-(5/16)/(\x-1-.5) + 1) + (11/32) - (3/32)+ .5});
    \draw[thick,domain=1.8125:1.875] plot (\x, {-8*(\x-1-(7/8))^2 + (22/32) - (3/32)+ .5});
    \draw[thick,domain=1.875:{1+((16*2-2)/(16*2))},color=red] plot (\x, {(22/32)-(3/32)+ .5});
    \draw[thick,domain={1+((16*2-2)/(16*2))}:{1+((16*2-1)/(16*2))},color=purple] plot 
    (\x, {-8*(-2)*2*(\x-1-(((16*2-2)/(16*2))))^2+(22/32) - (3/32)+ .5});
    \draw[thick,domain={1+((16*2-1)/(16*2))}:2,color=blue] plot 
    (\x, {(2)*(\x-1)+ ((32*2-3)*(-2)/(32*2)) + (22/32) - (3/32)+ .5});
\end{tikzpicture}
    \caption{
        The concatenation of two Frankenstein's functions.
        \label{fig-ce-example-main}}
\end{figure}

\subsection{Anti-Convergence Result for OFFO Methods}

We now state and prove our anti-convergence result.
\begin{proposition} \label{prop-catastrophic-divergence}
    Let $\theta_0 \in \mathbb{R}$, $\psi_0 \in \mathbb{R}$, 
    and $\mu_0 \in \mathbb{R}^m$. 
    Let $\mathcal{A}_{\offo}$ be a model for an OFFO method satisfying Conditions 
    \ref{condition-output-same} and \ref{condition-iterate-diverging}. 
    Then, there exists a function $F : \mathbb{R} \to \mathbb{R}$ satisfying
    Assumptions \ref{as:lower-bounded} and \ref{as:locally-lipschitz-cont} such that
    the sequence $\lbrace (\theta_k, \psi_k, \mu_k): k+1 \in \mathbb{N} \rbrace$, 
    defined by 
    \begin{equation}
        \theta_{k+1}, \psi_{k+1}, \mu_{k+1} = 
        \mathcal{A}_{\offo}(k, \theta_{k}, \psi_{k}, \mu_{k}, \dot F(\psi_{k})),
    \end{equation}
    satisfies $\liminf F(\theta_k) = \infty$ and $\liminf_k |d_k| \leq \liminf |\dot F(\theta_k)|$.
\end{proposition}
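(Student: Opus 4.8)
The plan is to realize the abstract divergent run guaranteed by Condition \ref{condition-iterate-diverging} as a genuine run of the method on a concrete objective produced by Lemma \ref{lemma-existence-of-objective}. First I would invoke Condition \ref{condition-iterate-diverging} with the given $\theta_0, \psi_0, \mu_0$ to obtain a gradient-information sequence $\{d_k : k+1 \in \mathbb{N}\}$ (the $d'_k$ of that condition) together with the induced triples $\{(\theta_k, \psi_k, \mu_k)\}$, where both $\{\theta_k\}$ and $\{\psi_k\}$ are strictly increasing and unbounded. The objective $F$ will be engineered so that its derivative at each ancillary iterate $\psi_k$ equals exactly the value $d_k$ that drove the abstract run; Condition \ref{condition-output-same} then lets me assert that feeding $\dot F(\psi_k)$ into the update reproduces the very same triples, so the concrete iterates inherit the divergence.

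To build $F$, I would merge $\{\theta_k\}$ and $\{\psi_k\}$ into a single strictly increasing unbounded sequence $\{\phi_j : j+1 \in \mathbb{N}\}$. This is legitimate because each of the two sequences is strictly increasing and unbounded, hence has only finitely many terms in any bounded interval, so their union has no finite accumulation point and can be enumerated in increasing order. On this merged sequence I prescribe target derivatives: at every $\phi_j$ equal to some $\psi_k$, set the derivative to match $\dot F(\psi_k) = d_k$ (i.e. $d_j := -d_k$ in the sign convention of Lemma \ref{lemma-existence-of-objective}); at the remaining points, which are the $\theta_k$ that are not ancillary iterates, assign any convenient derivative of magnitude at least $\liminf_k |d_k|$. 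Lemma \ref{lemma-existence-of-objective} then delivers a locally Lipschitz smooth $F$, so Assumptions \ref{as:lower-bounded} and \ref{as:locally-lipschitz-cont} hold, with the prescribed derivatives, the global lower bound $F(\theta) \geq -1/8$, and $F(\phi_j) \geq (\phi_j - \phi_0)/2$. An induction on $k$ completes the realization: assuming agreement of the concrete and abstract runs through index $k$, the two updates share identical arguments $(k, \theta_k, \psi_k, \mu_k, \cdot)$, and since $\dot F(\psi_k) = d_k$, Condition \ref{condition-output-same} forces the outputs to coincide, extending the agreement to $k+1$.

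Both conclusions then follow from the bounds supplied by Lemma \ref{lemma-existence-of-objective}. Since every iterate $\theta_k$ is one of the interpolation points $\phi_j$, the inequality $F(\phi_j) \geq (\phi_j - \phi_0)/2$ gives $F(\theta_k) \geq (\theta_k - \phi_0)/2$, and because $\theta_k \to \infty$ we obtain $\liminf_k F(\theta_k) = \infty$, i.e. the optimality gap diverges even though $F$ is lower bounded. For the gradient statement, split the iterates: at $\theta_k$ that coincide with an ancillary iterate we have $|\dot F(\theta_k)| = |d_{k'}|$ for some index, forming a subsequence of $\{|d_k|\}$ whose $\liminf$ is therefore at least $\liminf_k |d_k|$, while at the free $\theta_k$ the magnitudes were chosen to be at least $\liminf_k |d_k|$; hence $\liminf_k |d_k| \leq \liminf_k |\dot F(\theta_k)|$, so the method cannot drive the gradient norm to zero whenever $\liminf_k |d_k| > 0$.

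The hard part will be the bookkeeping for methods with $\psi_k \neq \theta_k$, Nesterov's acceleration being the only such entry in Table \ref{table-counter-examples-divergence}: one must merge the two divergent sequences without creating collisions that would over-constrain the derivative at a shared point, and must verify that the prescribed gradient at each $\psi_k$ is consistent throughout the induction so that Condition \ref{condition-output-same} applies at every step. When $\psi_k = \theta_k$ the argument collapses to the clean case $\dot F(\theta_k) = d_k$, so the interleaving is only needed to absorb the accelerated method; the remaining technical point is confirming, via concatenation of the Frankenstein building blocks of Lemma \ref{lemma-component-function-properties}, that Lemma \ref{lemma-existence-of-objective} indeed accommodates an arbitrary prescribed-derivative sequence at the merged points.
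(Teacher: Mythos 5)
Your proposal is correct and follows essentially the same route as the paper's own proof: extract the abstract divergent run from Condition \ref{condition-iterate-diverging}, merge $\{\theta_k\}$ and $\{\psi_k\}$ into a single strictly increasing unbounded sequence, prescribe derivatives so that $\dot F(\psi_k) = d'_k$ (with the $\psi$-assignment taking precedence at collisions), invoke Lemma \ref{lemma-existence-of-objective} to build $F$, and close the loop by induction using Condition \ref{condition-output-same}, after which both conclusions follow from $F(\phi_j) \geq (\phi_j - \phi_0)/2$ and a subsequence-liminf argument. The only cosmetic difference is at the ``free'' points (those $\theta_k$ not equal to any $\psi_\ell$), where the paper simply assigns derivative $1$ whereas you require magnitudes at least $\liminf_k |d_k|$ --- a choice that needs the trivial amendment of growing magnitudes in the degenerate case $\liminf_k |d_k| = \infty$, but which changes nothing in the argument's structure.
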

\begin{proof}
    By Condition \ref{condition-iterate-diverging}, there exists a set
    $\{d'_k : k + 1 \in \mathbb{N}\}$ and sequence $\{(\theta_k, \psi_k, \mu_k) : k + 1 \in \mathbb{N}\}$ 
    such that
    \begin{equation}
        \theta_{k+1}, \psi_{k+1}, \mu_{k+1} = 
        \mathcal{A}_{\offo}(k, \theta_{k}, \psi_{k}, \mu_{k}, d'_{k}),
        ~\forall (k+1) \in \mathbb{N},
    \end{equation}
    where $\{\theta_k : k + 1 \in \mathbb{N}\}$ and $\{\psi_k : k + 1 \in \mathbb{N}\}$ are strictly increasing and diverging.
    We define $F(\theta)$ by Lemma \ref{lemma-existence-of-objective}, which requires two sets:
    $\{\phi_j : j + 1 \in \mathbb{N}\}$ and $\{d_j : j + 1 \in \mathbb{N}\}$, which we construct as follows. 

    Let $\phi_0 = \min \lbrace \min_k \lbrace \theta_k \rbrace, \min_{k} \lbrace \psi_k \rbrace \rbrace$. Define $\lbrace \phi_j : j \in \mathbb{N} \rbrace$ recursively by 
    \begin{equation}
        \phi_{j} = \min \lbrace \min_{k : \theta_k > \phi_{j-1}} \lbrace \theta_k \rbrace, 
        \min_{k : \psi_k > \phi_{j-1}} \lbrace \psi_k \rbrace \rbrace. 
    \end{equation}
    In other words, $\lbrace \phi_k : k+1 \in \mathbb{N} \rbrace$ is a strictly increasing
    sequence made up by ordering $\lbrace \theta_k : k+1 \in \mathbb{N} \rbrace$ and
    $\lbrace \psi_k :k+1 \in \mathbb{N}\rbrace$. 
    Furthermore, since $\lbrace \theta_k: k+1 \in \mathbb{N} \rbrace$ is a 
    diverging (by Condition \ref{condition-iterate-diverging}) subsequence of 
    $\lbrace \phi_k : k+1 \in \mathbb{N} \rbrace$, then $\lbrace \phi_k : k+1 
    \in \mathbb{N} \rbrace$ diverges. 

    Next, $\forall (j+1) \in \mathbb{N}$, let
    \begin{equation}
        d_j = \begin{cases}
            1 & \phi_j \not\in \lbrace \psi_k : k+1 \in \mathbb{N} \rbrace \\ 
            -d_{\ell}' & \text{where}~ \phi_j = \psi_{\ell}.
        \end{cases}
    \end{equation}

    With $\{\phi_k : k + 1 \in \mathbb{N}\}$ and $\{d_k : k + 1 \in \mathbb{N}\}$,
   let $F(\theta)$ be given by Lemma \ref{lemma-existence-of-objective}.
    We now show that $\forall k \in \mathbb{N}$, $\theta_k, \psi_k, \mu_k = \mathcal{A}_{\offo}(k-1, \theta_{k-1}, \psi_{k-1}, \mu_{k-1}, \dot F(\psi_{k-1}))$
    by induction.
    As verifying the base case is analogous to the generalization step, we show only the 
    generlization step. 
    For the inductive hypothesis, suppose this is true for all $j \in \{1,...,k\}$ for $k \in \mathbb{N}$.

    By our construction, there exists an $\ell_{k} + 1 \in \mathbb{N}$ such that $\phi_{\ell_{k}} = \psi_{k}$ and $d_{\ell_{k}} = -d'_{k}$.
    Therefore, $\dot F(\psi_{k}) = \dot F(\phi_{\ell_{k}}) = -d_{\ell_{k}} = d'_{k}$. 
    Hence, $\theta_{k+1}, \psi_{k+1}, \mu_{k+1}
    = \mathcal{A}_{\offo}(k, \theta_{k}, \psi_{k}, \mu_{k}, \dot F(\psi_{k}))$.

    We now turn to the conclusion of the statement. Just as above, there exists an 
    $\ell_{k} + 1 \in \mathbb{N}$ such that $\phi_{\ell_k} = \theta_k$. 
    By Lemma \ref{lemma-existence-of-objective}, 
    $F(\theta_k) = F(\phi_{\ell_k}) \geq (\phi_{\ell_k} - \phi_0) = (\theta_k - \phi_0)/2$. 
    Since $\theta_k \to \infty$ as $k \to \infty$ by 
    Condition \ref{condition-iterate-diverging}, $\lim_{k} F(\theta_k) = \infty$. 
    Finally, $\liminf |d_k| \leq \liminf |\dot F(\theta_k)|$ since 
    $\{\dot F(\theta_k) : k + 1 \in \mathbb{N}\} \subseteq \{d_k : k + 1 \in \mathbb{N}\}$.
    \qed
\end{proof}

\section{Algorithm Design}      
\label{sec:algorithm}
For our computational pattern (expensive objective evaluations, and relatively 
inexpensive gradient evaluations) and function class (see Assumptions 
\ref{as:lower-bounded} and \ref{as:locally-lipschitz-cont}) of interest,
we recall that methods that check the objective multiple times before accepting a 
new iterate are computationally undesirable, while methods that completely avoid 
objective function evaluations seem desirable, 
yet can experience anti-convergence (see Section \ref{sec:counter}).
To address this gap, we introduce Algorithm \ref{alg:novel-step-size}: 
a novel method with economical, event-driven objective function evaluations 
that safeguard against divergence of the optimality gap.
Our algorithm has three critical, novel  components:
(1) events that trigger objective function evaluations;
(2) a non-sequential Armijo condition;
(3) and a step size strategy designed to greedily improve the optimality gap prior 
to a triggering event. 

\begin{algorithm}[!ht]
    \caption{Novel Gradient Descent Algorithm with Non-sequential Armijo}
    \label{alg:novel-step-size}
    \begin{algorithmic}[1]
    \Require $F, \dot F, \theta_{0}, \epsilon > 0, \rho \in (0, 1), 
    \overline{\delta} \in (0, \infty), \delta_0 \in (0, \overline{\delta}]$    
     
    \State $k \leftarrow 0$ \Comment{Outer loop counter}
   

    \State $\tau_{\gra,\low}^0, \tau_{\gra,\upp}^0 \leftarrow \inlinenorm{\dot{F}(\theta_0)}_2/\sqrt{2}, \sqrt{10}\inlinenorm{\dot{F}(\theta_0)}_2$ \Comment{Test thresholds on gradient}
    
    \While{$\inlinenorm{ \dot F(\theta_k) }_2 > \epsilon$} \Comment{Outer loop}
    
    \State $j, \psi_0^k \leftarrow 0, \theta_k$ \Comment{Inner loop counter and initialization}
    
    \State $\hat{L}_j^k \leftarrow \Call{Update}{j, k}$      
    \Comment{Initial local Lipschitz constant approximation}\label{line:alg-ns-lip-approx}
    
    \While{$\true$} \Comment{Inner Loop}
			\State $C_{j,1}^k \leftarrow 
				\inlinenorm{\dot{F}(\psi_j^k)}_2^3 + .5\inlinenorm{\dot{F}(\psi_j^k)}_2^2 \hat{L}_j^k + 10^{-16 }$
			\State $C_{j,2}^k \leftarrow \inlinenorm{\dot{F}(\psi_j^k)}_2 + .5 \hat{L}_j^k + 10^{-16}$
    		\State $\alpha^k_{j} \leftarrow 
            \min( [(\tau^{k}_{\gra,\low})^2/C_{j, 1}^k], 1/C_{j,2}^k ) + 10^{-16}$ 	\label{line:alg-ns-stepsize-calc}
    		\Comment{Adaptive step size}
			\State Call Algorithm \ref{alg:triggering-event-logic} to check for inner loop termination.	\label{line:call-trigger-check}
    		
    		\State $\psi_{j+1}^k, j \leftarrow \psi_j^k - \delta_k \alpha_j^k \dot{F}(\psi^k_j), j+1$  
			\Comment{Gradient step} 
            \State $\hat{L}_j^k \leftarrow \Call{Update}{j, k}$ \Comment{Update local Lipschitz constant approximation}
    \EndWhile 

    \EndWhile 
	\State \Return $\theta_k$    
    \end{algorithmic}
\end{algorithm}
\begin{remark}
	Parameters in algorithm \ref{alg:novel-step-size} and \ref{alg:triggering-event-logic}
    such as $\tau_{\gra,\low}^k, \tau_{\gra,\upp}^k, \delta_{k+1}$,
	and the max iteration for each inner loop can be selected from a much wider range.
    The values suggested here seem to work quite well on suites of test problems 
    (these results are not reported here).
\end{remark} 

\subsection{Event-based Objective Function Evaluations}
Consider an iterative optimization method initialized at some outer 
loop iterate, $\theta_0$, and envision an imaginary ball around this point. 
The iterative method may produce an iterate that escapes this imaginary ball, 
or it may remain entirely within the ball. 
If an iterate escapes the ball, this will trigger an objective function evaluation 
and possible changes to algorithm parameters. 
If the iterates remain in the ball, then the iterates are either 
\begin{enumerate}
\item Progressing towards a stationary point as measured by an iterate's gradient-norm
    dropping below a threshold relative to $\inlinenorm{\dot F(\theta_0)}_2$; 
\item Or enter into ``cycling''-type behavior in which the iterates remain in the ball 
    but do not make progress towards a stationary point.
\end{enumerate}
In the first case, dropping below a gradient-norm threshold relative to 
$\inlinenorm{\dot F(\theta_0)}_2$ would trigger an objective evaluation and 
possible algorithm parameter changes. 
In the second case, if the objective function has not been evaluated for some 
fixed number of iterates, then the objective function will be evaluated and 
algorithm parameters may be changed.

These triggering events are checked in Line \ref{line:call-trigger-check} 
of Algorithm \ref{alg:novel-step-size}.
Specifics of the triggering events are shown in Algorithm \ref{alg:triggering-event-logic},
which includes an optional but useful safeguard for checking if the gradient-norm 
of the iterates exceeds an upper threshold relative to $\inlinenorm{\dot F(\theta_0)}_2$. 

When a triggering event occurs, the most recent iterate is used as either the 
new outer loop iterate or the process is restarted at the most recent outer 
loop iterate with algorithm parameter adjustments that ensure descent 
(see Section \ref{sec:theory}).
We now discuss how the decision is made to accept the inner loop iterate most recent 
to a triggering event, or whether to discard it. 

\begin{algorithm}[!ht]
    \caption{Triggering Events and Post-Descent-Check Logic}
    \label{alg:triggering-event-logic}
    \begin{algorithmic}[1]
    \Require $F, \dot F, \theta_k, \psi_j^k, \delta_k, \alpha_0^k, \rho, \tau_{\gra,\low}^k, \tau_{\gra,\upp}^k, j, k$
    \If{ $\inlinenorm{ \psi_j^k - \theta_k }_2 > 10$ 
        \textbf{or} $\inlinenorm{ \dot F(\psi_j^k)}_2 \not\in (\tau_{\gra,\low}^k, \tau^k_{\gra,\upp})$ 
        \textbf{or} $j == 100$} 																								\label{line:alg-ns-te}
        \If{ $F(\psi_j^k) \geq F(\theta_k) - \rho \delta_k \alpha_0^k \inlinenorm{\dot{F}(\theta_k)}_2^2 $ }
            \State $\theta_{k+1}, \delta_{k+1} \leftarrow \theta_k, .5 \delta_k$												\label{line:alg-ns-decrease-sf}
            \Comment{Reset iterate with reduced step size scaling}
        \ElsIf{ $\inlinenorm{ \dot F(\psi^k_j)}_2 \leq \tau_{\gra,\low}^k$ }
            \State $\theta_{k+1}, \delta_{k+1} \leftarrow \psi_j^k, \delta_k$													\label{line:alg-ns-accept-lwgd}
        \Comment{Accept iterate and leave step size unchanged}    			
            \State $\tau_{\gra,\low}^{k+1}, \tau_{\gra,\upp}^{k+1} \leftarrow \inlinenorm{\dot{F}(\theta_{k+1})}_2/\sqrt{2}, \sqrt{20}\tau_{\gra,\low}^{k+1}$
        \ElsIf{$\inlinenorm{ \dot F(\psi^k_j)}_2 \geq \tau_{\gra,\upp}^k$}
            \State $\theta_{k+1}, \delta_{k+1}, \leftarrow \psi_j^k, \min\{1.5\delta_k, \overline{\delta}\}$									\label{line:alg-ns-accept-upgb}
            \Comment{Accept iterate and increase step size}
            \State $\tau_{\gra,\low}^{k+1}, \tau_{\gra,\upp}^{k+1} \leftarrow\inlinenorm{\dot{F}(\theta_{k+1})}_2/\sqrt{2}, \sqrt{20}\tau_{\gra,\low}^{k+1}$
        \Else
            \State $\theta_{k+1}, \delta_{k+1}, \leftarrow \psi_j^k, \min\{1.5\delta_k, \overline{\delta}\}$									\label{line:alg-ns-accept-ingb}
            \Comment{Accept iterate and increase step size}
            \State $\tau_{\gra,\low}^{k+1}, \tau_{\gra,\upp}^{k+1} \leftarrow \tau_{\gra,\low}^{k} \tau_{\gra,\upp}^{k} $
        \EndIf
        \State $k \leftarrow k+1$
        \State Exit Inner Loop
    \EndIf
    \end{algorithmic}
\end{algorithm}

\begin{remark}
    The use of a ball of size 10 in Line 1 in Algorithm \ref{alg:triggering-event-logic}
    can be selected from a wider range and does not need to be fixed. The value of 10 
    suggested here is based on an initial attempt at choosing this parameter that worked 
    quite well on problems, so we did not try to tune it further. There are reasons to
    make this parameter adaptive as discussed at the end of 
    Sections \ref{sec:analysis-grad-subsequence} and \ref{sec:convergence-rate},
    which we leave to future work.
\end{remark}

\subsection{Non-sequential Armijo Condition}

To determine whether to accept the terminal inner loop iterate as the next 
outer loop iterate, we use a novel evolution of Armijo's line search condition 
\cite{armijo1966Minimization}.
Recall, the standard Armijo condition is inspired by a first-order Taylor expansion model:
for the acceptance of a proposed iterate, $\psi$, 
$F(\psi)$ must be no greater than $ F(\theta) + \rho \dot F(\theta)^\intercal(\psi - \theta)$, 
where $\rho > 0$ is (typically) a small relaxation parameter and $\theta$ is 
some initial point \cite[Eq. 1.11]{bertsekas1999}. 
Our condition on the other-hand does not require such a local model:
we accepted a proposed iterate, $\psi$, if 
$F(\psi)$ is less than $F(\theta) - \rho \delta \alpha \inlinenorm{\dot F(\theta)}_2^2$,
where $\psi$ is not necessarily (and usually is not)  $\theta - \delta \alpha \dot F(\theta)$.%
\footnote{Note, $\psi = \theta - \delta \alpha \dot F(\theta)$, then the two conditions 
coincide. However, $\psi$ can only be equal to this if an event is triggered during the 
first inner loop iteration.}

\subsection{A Greedy Step Size Strategy}
\begin{algorithm}[!ht]
    \caption{Update Scheme for Local Lipschitz Approximation}
    \label{alg:update}
    \begin{algorithmic}[1]
        \Require $\dot{F}, \psi_{j}^k, \psi_{j-1}^k$
        \Function{Update}{$j,k$} \Comment{Method for Local Lipschitz Approximation}
            \If{$j == 0$ \textbf{and} $k == 0$}
                \State $\hat{L}_0^0 \leftarrow 1$
                \Comment{Initial value}
            \ElsIf{$j == 0$ \textbf{and} $k > 0$}
                \State $\hat{L}_0^k \leftarrow \hat{L}_{j_{k-1}}^{k-1}$
                \Comment{Use previous estimate}
            \ElsIf{$j > 0$ \textbf{and} $k \ge 0$ \textbf{and} $\theta_{k} \not= \theta_{k-1}$}
                \State $\hat{L}_{j}^k \leftarrow \frac{\inlinenorm{\dot{F}(\psi_j^k) - \dot{F}(\psi_{j-1}^k)}_2}{\inlinenorm{\psi_j^k-\psi_{j-1}^k}_2} $
                \Comment{Aggressive estimate}
            \ElsIf{$j > 0$ \textbf{and} $k \geq 0$ \textbf{and} $\theta_k = \theta_{k-1}$}
                \State $\hat{L}_{j}^k \leftarrow \max\bigpar{ \frac{\inlinenorm{\dot{F}(\psi_j^k) - \dot{F}(\psi_{j-1}^k)}_2}{\inlinenorm{\psi_j^k-\psi_{j-1}^k}_2}, \hat{L}_{j-1}^k }$
                \Comment{Conservative estimate}
            \EndIf
            \State \Return $\hat{L}_j^k$
        \EndFunction
    \end{algorithmic}
\end{algorithm} 

Finally, between objective function evaluations (i.e., between two triggering events),
we run an adaptive step size gradient descent routine so as to
to greedily improve the optimality gap without checking the objective function
(see lines \ref{line:alg-ns-lip-approx} and \ref{line:alg-ns-stepsize-calc} in 
Algorithm \ref{alg:novel-step-size}).
Like other works in adaptive step size algorithms, our step size is the minimum 
between two terms \cite{malitsky2023adaptive,zhang2019Why}, but takes advantage 
of the structure imposed by the triggering events: 
the first term is derived through a Zoutendjik's analysis combining information 
from our first and second triggering event; and
the second term is used as a safeguard against misbehavior of the first.%
\footnote{The appearance of the numerical constant, $10^{-16}$, is for numerical stability.}
Analogous to other step size strategies \cite{nesterov2012GradientMF,curtis2018exploiting,zhang2020firstorder},
each term attempts to approximate the local Lipschitz constant 
(see Algorithm \ref{alg:update}).
We emphasize that, while our step size is closely related to these other algorithms, 
it differs in that no local model is used to verify the accuracy of our estimate, 
and it only relies on local Lipschitz smoothness of the objective in its derivation.

\section{Global Convergence Analysis}                
\label{sec:theory}
Here, we conduct a (first-order) global convergence analysis of Algorithm \ref{alg:novel-step-size}
in two steps.
\begin{enumerate}
\item \emph{Accepting new iterates.} We show that eventually an outer loop must 
produce an iterate that satisfies our descent condition (i.e., accepting a new iterate) 
unless the algorithm already found a first-order stationary point.
\item \emph{Analysis of a gradient subsequence.} We analyze the asymptotic 
behavior of the gradients at the iterates to conclude that either a stationary 
point is found in finite time, the full sequence of gradients tend to zero, 
or a subsequence tends to zero under certain growth conditions on the local Lipschitz constant.
\end{enumerate}

\subsection{Accepting new iterates}

To begin, we introduce some notation.
Let $\theta_k$ be the $k$th outer loop iterate (starting with $k=0$); 
let $\{\psi_0^k,...,\psi_{j_k}^k\}$ be the inner loop iterates generated at outer 
loop iteration $k$ ($\psi_0^k = \theta_k$); and 
let the terminal inner loop iteration denoted by $j_k \in \mathbb{N},~j_k \in (0,100]$ 
(see Algorithm \ref{alg:novel-step-size}).\footnote{In other words, at least
one of the triggering events
evaluated to true in Line \ref{line:alg-ns-te} of algorithm \ref{alg:triggering-event-logic} 
at $\psi_{j_k}^k$, and each evaluated to false at $\psi_{j_k-1}^k$.}

When an event is triggered at iteration $k$ (see Algorithm \ref{alg:triggering-event-logic}),
either the proposed iterate satisfies the descent condition and is accepted---%
that is, $\theta_{k+1} \leftarrow \psi_{j_k}^k$---,
or the proposed iterate is rejected---$\theta_{k+1} \leftarrow \theta_k$.
To keep track of the accepted iterates, which are those that are distinct and 
satisfy our descent condition, let
\begin{equation} \label{eqn-iterate-subsequence}
\ell_0 = 0\quad\mathrm{and}\quad \ell_t = \min_k \lbrace k > \ell_{t-1}: 
  \theta_k \neq \theta_{\ell_{t-1}} \rbrace, ~ \forall t \in \mathbb{N},
\end{equation}
with the convention that $\ell_t = \infty$ if no finite $k$ can be found to 
satisfy the property; and $\ell_{t} = \infty$ if $\ell_{t-1} = \infty$. 

Our analysis of accepting new iterates proceeds in four steps.
\begin{enumerate}
  \item We show that our step sizes, $\alpha_j^k$ are bounded away from zero and 
  from above. See Lemma \ref{result-bounded-step-size}.
  \item We show that all inner loop iterates remain in a compact set; and 
  if an inner loop iterate is not accepted, then the next set of inner loop iterates 
  remains in a compact set contained in the previous one. We use this result 
  to control the local Lipschitz constant. See Lemma \ref{result-compact-sets}
  \item Using the above two results, we show that if the adjustment parameter, 
  $\delta_k$, is sufficiently small, then a proposed iterate must be accepted. 
  See Lemma \ref{result-sufficient-scaling}.
  \item Finally, we show that this adjustment parameter will eventually occur, 
  which ensures that a proposed iterate will be accepted. 
  See Theorem \ref{result-accept-new-iterates}.
\end{enumerate}

\begin{lemma} \label{result-bounded-step-size}
Let $\underline \alpha = 10^{-16}$ and $\overline\alpha = 10^{-16} + 10^{16}$.
Then, for all $k+1 \in \mathbb{N}$ and for all $j+1 \in \mathbb{N}$ the step size $\alpha_j^k \in [\underline{\alpha}, \overline \alpha]$.
\end{lemma}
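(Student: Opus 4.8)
The plan is to read both bounds directly off the definition of $\alpha_j^k$ in Line \ref{line:alg-ns-stepsize-calc}, exploiting the two additive $10^{-16}$ terms that appear there and inside the denominators $C_{j,1}^k$ and $C_{j,2}^k$. The only structural fact I need to establish first is that the Lipschitz approximation is nonnegative, i.e. $\hat L_j^k \ge 0$ for every $j$ and $k$. This follows by a trivial induction over the cases in Algorithm \ref{alg:update}: the base value is $1$, the carry-over case inherits a previous nonnegative value, and the aggressive and conservative cases return either a quotient of nonnegative gradient-difference norms or the maximum of such a quotient with a previously nonnegative estimate; in every branch the output is $\ge 0$.

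Given $\hat L_j^k \ge 0$, and since gradient norms are nonnegative, both $C_{j,1}^k$ and $C_{j,2}^k$ are sums of nonnegative terms plus $10^{-16}$, so each is bounded below by $10^{-16} > 0$. In particular the two quotients inside the $\min$ are well-defined and nonnegative: $(\tau_{\gra,\low}^k)^2/C_{j,1}^k \ge 0$ as a square over a positive number, and $1/C_{j,2}^k \ge 0$. Hence their minimum is nonnegative, and adding the trailing $10^{-16}$ yields $\alpha_j^k \ge 10^{-16} = \underline\alpha$, which is the claimed lower bound.

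For the upper bound I would bound the $\min$ by its second argument alone. Since $C_{j,2}^k \ge 10^{-16}$, we have $1/C_{j,2}^k \le 10^{16}$, so $\min(\,\cdot\,, 1/C_{j,2}^k) \le 10^{16}$ regardless of how large the first argument is; adding the final $10^{-16}$ gives $\alpha_j^k \le 10^{16} + 10^{-16} = \overline\alpha$. Note I deliberately avoid bounding the first quotient $(\tau_{\gra,\low}^k)^2/C_{j,1}^k$, which is convenient because the threshold $\tau_{\gra,\low}^k$ is not \emph{a priori} controlled. There is essentially no obstacle in this lemma: the entire statement is a consequence of the deliberate $+10^{-16}$ regularization in the step-size formula, and the only point worth stating with care is the nonnegativity of $\hat L_j^k$, since that is what keeps the denominators bounded away from zero and the quotients nonnegative.
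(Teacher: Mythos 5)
Your proposal is correct and follows essentially the same argument as the paper: the lower bound comes from the trailing $+10^{-16}$ in Line \ref{line:alg-ns-stepsize-calc}, and the upper bound comes from bounding the $\min$ by its second argument $1/C_{j,2}^k \leq 10^{16}$, which relies on the nonnegativity of $\hat{L}_j^k$ from Algorithm \ref{alg:update}. The only difference is cosmetic: you spell out the induction showing $\hat{L}_j^k \geq 0$, which the paper simply asserts by inspection of Algorithm \ref{alg:update}.
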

\begin{proof}
From line \ref{line:alg-ns-stepsize-calc} in algorithm \ref{alg:novel-step-size}, $\alpha_j^k \geq 10^{-16}$, and
\begin{equation}
    \alpha_j^k \leq \frac{1}{\inlinenorm{\dot F(\psi_j^k)}_2 + .5\hat{L}_j^k + 10^{-16}} + 10^{-16} \leq 10^{16} + 10^{-16},
\end{equation}
since $\hat L_{j}^k$ is always non-negative (see Algorithm \ref{alg:update}).
\qed
\end{proof}

\begin{lemma} \label{result-compact-sets}
Suppose \eqref{eqn-problem} satisfies assumption \ref{as:locally-lipschitz-cont}, 
and is solved using algorithm \ref{alg:novel-step-size} 
initialized at any $\theta_0 \in \mathbb{R}^n$. 
Then, for every $k+1 \in \mathbb{N}$, there exists a compact $C_k \subset \mathbb{R}^n$ such that $\theta_k \in C_k$, $\lbrace \psi_{1}^k,\ldots, \psi_{j_k}^k \rbrace \subset C_k$, and, if $\theta_{k} = \theta_{k+1}$, then $C_{k} \supset C_{k+1}$.
\end{lemma}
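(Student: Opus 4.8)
The plan is to take $C_k$ to be a closed Euclidean ball centered at $\theta_k$ whose radius is chosen just large enough to contain every inner loop iterate. First I would exploit the definition of the terminal index $j_k$ as the first inner loop iteration at which the triggering check in Line \ref{line:alg-ns-te} of Algorithm \ref{alg:triggering-event-logic} evaluates to true: at every earlier iterate the check is false, so in particular its first disjunct fails and $\inlinenorm{\psi_j^k - \theta_k}_2 \leq 10$ for all $0 \leq j \leq j_k - 1$ (the case $j = 0$ being trivial since $\psi_0^k = \theta_k$). Thus all non-terminal inner loop iterates lie in the compact ball $\overline{B}(\theta_k, 10)$. Since $\dot F$ is continuous (Assumption \ref{as:locally-lipschitz-cont} together with Definition \ref{def-locally-lip}) and $\overline{B}(\theta_k, 10)$ is compact, the quantity $M_k \defeq \max\{ \inlinenorm{\dot F(\theta)}_2 : \theta \in \overline{B}(\theta_k, 10)\}$ is finite.

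To bound the single remaining iterate $\psi_{j_k}^k$, I would write $\psi_{j_k}^k = \psi_{j_k-1}^k - \delta_k \alpha_{j_k-1}^k \dot F(\psi_{j_k-1}^k)$ and apply the triangle inequality together with $\inlinenorm{\psi_{j_k-1}^k - \theta_k}_2 \leq 10$, the step size bound $\alpha_{j_k-1}^k \leq \overline{\alpha}$ from Lemma \ref{result-bounded-step-size}, and $\inlinenorm{\dot F(\psi_{j_k-1}^k)}_2 \leq M_k$, yielding $\inlinenorm{\psi_{j_k}^k - \theta_k}_2 \leq 10 + \delta_k \overline{\alpha} M_k$. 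I would then set $C_k \defeq \overline{B}(\theta_k, 10 + \delta_k \overline{\alpha} M_k)$, which is compact, contains $\theta_k$ as its center, and contains $\psi_1^k, \ldots, \psi_{j_k}^k$ since the non-terminal iterates lie within radius $10$ and the terminal one within the full radius.

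For the nesting claim, suppose $\theta_{k+1} = \theta_k$. The crucial step is to argue this can only arise from the rejection branch (Line \ref{line:alg-ns-decrease-sf}). Each accept branch sets $\theta_{k+1} = \psi_{j_k}^k$ and is reached only when the non-sequential descent condition $F(\psi_{j_k}^k) < F(\theta_k) - \rho \delta_k \alpha_0^k \inlinenorm{\dot F(\theta_k)}_2^2$ holds; because the active outer loop test guarantees $\inlinenorm{\dot F(\theta_k)}_2 > \epsilon > 0$ and $\rho, \delta_k, \alpha_0^k > 0$ (the last by Lemma \ref{result-bounded-step-size}), this forces $F(\psi_{j_k}^k) < F(\theta_k)$, hence $\psi_{j_k}^k \neq \theta_k$. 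Therefore $\theta_{k+1} = \theta_k$ implies the rejection branch was taken, so $\delta_{k+1} = 0.5\delta_k \leq \delta_k$ and the center is unchanged. Consequently $\overline{B}(\theta_{k+1}, 10) = \overline{B}(\theta_k, 10)$, giving $M_{k+1} = M_k$, and the radius satisfies $10 + \delta_{k+1}\overline{\alpha}M_{k+1} \leq 10 + \delta_k \overline{\alpha} M_k$, so $C_{k+1} \subseteq C_k$ (with strict containment whenever $M_k > 0$, which holds since $M_k \geq \inlinenorm{\dot F(\theta_k)}_2 > \epsilon$).

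The main obstacle is this last step: it is not immediately obvious that $\theta_{k+1} = \theta_k$ forces the rejection branch, and this must be extracted from the descent condition combined with the strict positivity of $\inlinenorm{\dot F(\theta_k)}_2$ enforced by the outer loop test. Once the rejection branch is isolated, the invariance of the center and of $M_k$ under the reset, together with the halving of $\delta_k$, makes the containment immediate; the remaining work (the terminal-step bound) is routine given Lemma \ref{result-bounded-step-size} and the compactness of $\overline{B}(\theta_k,10)$.
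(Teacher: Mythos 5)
Your proposal is correct and takes essentially the same approach as the paper: the same compact balls $C_k$ of radius $10 + \delta_k \overline{\alpha}\, \mathcal{G}(\theta_k)$ (your $M_k$ is the paper's $\mathcal{G}(\theta_k)$), the same use of the failed triggering check to place the non-terminal inner iterates in the radius-$10$ ball, and the same triangle-inequality bound on the terminal iterate via Lemma \ref{result-bounded-step-size}. Your explicit argument that $\theta_{k+1} = \theta_k$ can only arise from the rejection branch (using the outer-loop test $\inlinenorm{\dot F(\theta_k)}_2 > \epsilon$ and the strict descent inequality) justifies a step the paper's proof leaves implicit, and is a correct refinement rather than a different route.
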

\begin{proof}
We use Lemma \ref{result-bounded-step-size} to construct the desired compact sets.
For any $\theta \in \mathbb{R}^n$, define $\mathcal{B}(\theta) = \lbrace \psi: 
\inlinenorm{ \psi - \theta}_2 \leq 10 \rbrace$ (note, $10$ comes from the 
radius bound in Algorithm \ref{alg:triggering-event-logic}).
Define $\mathcal G(\theta) = \sup_{\psi \in \mathcal{B}(\theta)} 
\inlinenorm{ \dot F(\psi) }_2$, which is finite because of the continuity of the 
gradient (see Assumption \ref{as:locally-lipschitz-cont}) and compactness of 
$\mathcal{B}(\theta)$. 
Define
\begin{equation} \label{eqn-compactset-outerloop}
C_k = \lbrace \psi : \inlinenorm{ \psi - \theta_k }_2 \leq 10 + \delta_k 
\overline\alpha \mathcal G(\theta_k) \rbrace, ~\forall k+1 \in \mathbb{N}.
\end{equation}
We first show $C_k$ satisfies the desired properties. First, $\theta_k \in C_k$ as the radius of the ball defining $C_k$ is non-negative. Second, by definition, the inner loop iterate $\psi_{i}^k$ satisfies $\inlinenorm{ \psi_{i}^k - \theta_k}_2 \leq 10$ for $i=1,\ldots,j_k-1$. 
In other words, $\psi_{i}^k \in \mathcal{B}(\theta_k)$ for $i=1,\ldots,j_k-1$. 

By the upper bounds on the step size and by assumption \ref{as:locally-lipschitz-cont},
\begin{align}
\inlinenorm{ \psi_{j_k}^k - \theta_k} _2 
&\leq \inlinenorm{ \psi_{j_k}^k - \psi_{j_k-1}^k}_2 + \inlinenorm{\psi_{j_k -1}^k - \theta_k}_2 \\
&\leq \inlinenorm{ \delta_k \alpha_{j_k-1}^k \dot F(\psi_{j_k -1 }^k) }_2 + 10 \\
&\leq \delta_k \overline \alpha \mathcal G(\theta_k) + 10.
\end{align}
To summarize, $\psi_{j_k}^k \in C_k$ and $\psi_{i}^k \in \mathcal{B}(\theta_k) \subset C_k$ for $i=1,\ldots,j_k-1$. 

Finally, when $\theta_{k} = \theta_{k+1}$, $\delta_{k+1} = .5 \delta_k < \delta_k$. 
Plugging this in (\ref{eqn-compactset-outerloop}), $C_{k} \supset C_{k+1}$.
\qed
\end{proof}

\begin{lemma} \label{result-sufficient-scaling}
Suppose \eqref{eqn-problem} satisfies assumption \ref{as:locally-lipschitz-cont}, 
and is solved using algorithm \ref{alg:novel-step-size} 
initialized at any $\theta_0 \in \mathbb{R}^n$.
Let $C \subset \mathbb{R}^n$ be a compact set such that $C_k \subseteq C$ 
for some $k+1 \in \mathbb{N}$, where $\lbrace C_k : k+1 \in \mathbb{N} \rbrace$ 
are defined in Lemma \ref{result-compact-sets}.
Let $\mathcal L(C)$ denote the Lipschitz rank of the gradient function over $C$
(which we can take to be positive). 
If $\dot F(\theta_k) \neq 0$ and
\begin{equation}
\delta_k < \frac{2 (1 - \rho) }{ \mathcal L(C) \overline \alpha},
\end{equation}
then $F(\psi_{j_k}^k) < F(\theta_k) - \rho \delta_k \alpha_{0}^k \inlinenorm{\dot F(\theta_k)}_2^2$. 
\end{lemma}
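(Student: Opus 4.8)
The plan is to derive the non-sequential Armijo inequality by accumulating the per-step decrease of $F$ along the inner loop $\psi_0^k = \theta_k, \psi_1^k, \ldots, \psi_{j_k}^k$ and then discarding all but the first summand. First I would invoke Lemma \ref{result-compact-sets}: since $C_k \subseteq C$ and $C_k$ is a ball (hence convex), every inner-loop iterate $\psi_j^k$ and every segment joining $\psi_j^k$ to $\psi_{j+1}^k$ lies in $C$. This is exactly what licenses the use of the Lipschitz rank $\mathcal L(C)$ of $\dot F$ along each gradient step, even though $C$ itself need not be convex.

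For a single step, $\psi_{j+1}^k = \psi_j^k - \delta_k \alpha_j^k \dot F(\psi_j^k)$, I would apply the standard descent (quadratic-upper-bound) lemma, valid because the connecting segment is contained in $C$, and substitute the update rule to obtain
\begin{equation}
F(\psi_{j+1}^k) \le F(\psi_j^k) - \delta_k \alpha_j^k \inlinenorm{\dot F(\psi_j^k)}_2^2 \bigpar{ 1 - \tfrac{\mathcal L(C)}{2}\delta_k \alpha_j^k }.
\end{equation}
Next, using $\alpha_j^k \le \overline\alpha$ from Lemma \ref{result-bounded-step-size} together with the hypothesis $\delta_k < 2(1-\rho)/(\mathcal L(C)\overline\alpha)$, the bracketed factor strictly exceeds $\rho$, since $\tfrac{\mathcal L(C)}{2}\delta_k\alpha_j^k \le \tfrac{\mathcal L(C)}{2}\delta_k \overline\alpha < 1 - \rho$. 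Because the prefactor $-\delta_k \alpha_j^k \inlinenorm{\dot F(\psi_j^k)}_2^2$ is non-positive, multiplying by a factor larger than $\rho$ yields $F(\psi_{j+1}^k) - F(\psi_j^k) \le -\rho \delta_k \alpha_j^k \inlinenorm{\dot F(\psi_j^k)}_2^2$, with strict inequality whenever $\dot F(\psi_j^k) \ne 0$.

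Finally I would telescope this bound over $j = 0, \ldots, j_k - 1$ (recall $j_k \ge 1$). Since $\psi_0^k = \theta_k$ and $\dot F(\theta_k) \ne 0$ by hypothesis, the $j = 0$ term is a strict decrease of at least $\rho \delta_k \alpha_0^k \inlinenorm{\dot F(\theta_k)}_2^2$, while every remaining term is non-positive; dropping the latter gives precisely $F(\psi_{j_k}^k) < F(\theta_k) - \rho \delta_k \alpha_0^k \inlinenorm{\dot F(\theta_k)}_2^2$. The main obstacle is not any computation but the geometric bookkeeping: one must be certain that all iterates and every segment between them stay inside a single convex set carrying a uniform Lipschitz rank, so the descent lemma applies verbatim at each step. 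This is exactly what the nested-ball construction of Lemma \ref{result-compact-sets} supplies; once that is in hand, verifying that the $\delta_k$ hypothesis forces the bracketed factor above $\rho$ and isolating the leading term are routine.
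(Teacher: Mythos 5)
Your proposal is correct and follows essentially the same route as the paper's proof: a per-step descent (Taylor/quadratic-upper-bound) estimate using $\mathcal L(C)$, the observation that $\delta_k \alpha_j^k \le \delta_k \overline\alpha < 2(1-\rho)/\mathcal L(C)$ turns each step into an Armijo-type decrease, and a telescoping sum whose strictness comes from the $j=0$ term since $\dot F(\theta_k) \neq 0$. The only cosmetic differences are that you discard the non-leading summands while the paper retains the full sum, and that you spell out the segment-containment/convexity bookkeeping that the paper leaves implicit.
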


\begin{proof}
We recall several facts. 
By Lemmas \ref{result-bounded-step-size} and \ref{result-compact-sets}, $\lbrace \psi_{j}^k : j=0,\ldots, j_k \rbrace \subset C_k \subseteq C$, and $0 < \underline\alpha \leq \alpha_j^k \leq \overline\alpha < \infty$ for $j = 0,\ldots j_k-1$. 
Moreover, $j_k \neq 0$.

Now, we convert the hypothesis on $\delta_k$ into a more useful form. 
Specifically, by Lemma \ref{result-bounded-step-size}, $\delta_k \alpha_j^k \leq \delta_k \overline\alpha < [2(1-\rho) ]/\mathcal L(C)$ for $j=0,\ldots,j_k -1$. 
By algorithm \ref{alg:novel-step-size} and Lemma \ref{result-bounded-step-size}, $\delta_k \alpha_j^k > 0$, which implies
\begin{equation} \label{eq-for-taylors-theorem}
  \frac{(\delta_k \alpha_j^k)^2 \mathcal L(C)}{2} < (\delta_k \alpha_j^k)(1-\rho).
\end{equation}

By Taylor's theorem, Assumption \ref{as:locally-lipschitz-cont} and (\ref{eq-for-taylors-theorem}), for $j=0,\ldots,j_k-1$,   
\begin{align}
F(\psi_{j+1}^k) &\leq F(\psi_j^k) - \alpha_j^k \delta_k \inlinenorm{\dot F(\psi_j^k)}_2^2 + \frac{\mathcal L(C)}{2} \inlinenorm{ \delta_k \alpha_j^k \dot F(\psi_j^k) }_2^2\\
&\leq F(\psi_j^k) - \alpha_j^k \delta_k \inlinenorm{\dot F(\psi_j^k)}_2^2 + (\delta_k \alpha_j^k)(1-\rho) \inlinenorm{\dot F(\psi_j^k) }_2^2 \\
&\leq F(\psi_j^k) - \rho\alpha_j^k \delta_k \inlinenorm{\dot F(\psi_j^k)}_2^2 \label{eq-telescope-relation}\\
&\leq F(\psi_0^k) - \rho\delta_k\sum_{i=0}^j \alpha_i^k \inlinenorm{\dot F(\psi_i^k)}_2^2.
\end{align}
Since $\inlinenorm{\dot F(\psi_0^k)}_2 = \inlinenorm{\dot F(\theta_k)} \neq 0$
by hypothesis  
and $\alpha_i^k \geq \underline \alpha > 0$ 
by Lemma \ref{result-bounded-step-size}, the last inequality must be strict.
\qed
\end{proof}

\begin{theorem} \label{result-accept-new-iterates}
Suppose \eqref{eqn-problem} satisfies assumption \ref{as:locally-lipschitz-cont}, 
and is solved using algorithm \ref{alg:novel-step-size} initialized at any 
$\theta_0 \in \mathbb{R}^n$. 
Let $\lbrace \ell_t : t+1 \in \mathbb{N} \rbrace$ be defined as in 
\eqref{eqn-iterate-subsequence}. 
Then, for any $t+1\in \mathbb{N}$, if $\ell_t < \infty$ and $\dot F(\theta_{\ell_t}) \neq 0$, then $\ell_{t+1} < \infty$.
\end{theorem}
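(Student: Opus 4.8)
The plan is to argue by contradiction. Suppose $\ell_t < \infty$ and $\dot F(\theta_{\ell_t}) \neq 0$, yet $\ell_{t+1} = \infty$. By the definition in \eqref{eqn-iterate-subsequence}, $\ell_{t+1} = \infty$ means there is no $k > \ell_t$ with $\theta_k \neq \theta_{\ell_t}$; equivalently, $\theta_k = \theta_{\ell_t}$ for every $k \geq \ell_t$. Operationally, this says that every outer loop iteration from $\ell_t$ onward triggers an event whose proposed iterate is \emph{rejected} by the descent check. Inspecting Algorithm \ref{alg:triggering-event-logic}, rejection is precisely the branch in Line \ref{line:alg-ns-decrease-sf}, which keeps the iterate ($\theta_{k+1} = \theta_k$) while halving the scaling ($\delta_{k+1} = 0.5\,\delta_k$). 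Since no accepting branch is ever taken, $\delta_k = 2^{-(k-\ell_t)}\delta_{\ell_t} \to 0$.

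The key step is to freeze a single compact set and a single Lipschitz rank that remain valid throughout this infinite rejection phase. Because $\theta_k = \theta_{k+1}$ for all $k \geq \ell_t$, the nesting clause of Lemma \ref{result-compact-sets} gives $C_{\ell_t} \supseteq C_{\ell_t+1} \supseteq \cdots$, so every $C_k$ with $k \geq \ell_t$ lies inside the fixed compact set $C := C_{\ell_t}$. Let $\mathcal L(C) > 0$ be the Lipschitz rank of $\dot F$ on $C$, finite by Assumption \ref{as:locally-lipschitz-cont} and compactness. Since $\delta_k \to 0$, there is a finite index $k^\star \geq \ell_t$ with $\delta_{k^\star} < 2(1-\rho)/(\mathcal L(C)\,\overline\alpha)$.

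At this index, $\theta_{k^\star} = \theta_{\ell_t}$ yields $\dot F(\theta_{k^\star}) = \dot F(\theta_{\ell_t}) \neq 0$, so Lemma \ref{result-sufficient-scaling}, applied with the compact set $C$, gives the strict descent inequality $F(\psi_{j_{k^\star}}^{k^\star}) < F(\theta_{k^\star}) - \rho\,\delta_{k^\star}\alpha_0^{k^\star}\inlinenorm{\dot F(\theta_{k^\star})}_2^2$. This is exactly the negation of the rejection condition, so the event at $k^\star$ \emph{accepts} the proposed iterate: $\theta_{k^\star+1} = \psi_{j_{k^\star}}^{k^\star}$. Moreover, the strict inequality forces $F(\psi_{j_{k^\star}}^{k^\star}) < F(\theta_{k^\star})$, hence $\psi_{j_{k^\star}}^{k^\star} \neq \theta_{k^\star} = \theta_{\ell_t}$, so the accepted iterate is genuinely distinct. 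This contradicts $\ell_{t+1} = \infty$, and the proof is complete.

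I expect the main obstacle to be the compact-set bookkeeping just described: Lemma \ref{result-sufficient-scaling} only guarantees descent once $\delta_k$ drops below a threshold determined by $\mathcal L(C)$, so the whole argument would collapse if the relevant compact sets---and with them the Lipschitz rank---were permitted to grow while $\delta_k$ shrinks. The nesting property of Lemma \ref{result-compact-sets}, which holds precisely because the rejected iterates keep $\theta_k$ fixed, is what pins down a single $C$ and a single $\mathcal L(C)$, letting the geometric decay of $\delta_k$ beat one fixed threshold in finitely many steps.
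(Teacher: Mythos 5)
Your proposal is correct and follows essentially the same route as the paper's proof: assume for contradiction that $\theta_k = \theta_{\ell_t}$ for all $k \geq \ell_t$, use the nesting of the compact sets from Lemma \ref{result-compact-sets} to fix a single Lipschitz rank $\mathcal L(C_{\ell_t})$, note that repeated rejections force $\delta_k = (.5)^{k-\ell_t}\delta_{\ell_t} \to 0$ below the threshold of Lemma \ref{result-sufficient-scaling}, and conclude that an acceptance must occur, a contradiction. The only cosmetic difference is that the paper wraps this argument in an induction over $t$, while you argue directly for a fixed $t$; since the theorem's hypothesis already supplies $\ell_t < \infty$, your direct version loses nothing.
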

\begin{proof}
The proof is by induction. 
As the proof of the base case (i.e., $\ell_1 < \infty$) uses the same argument as the conclusion (i.e., if $\ell_t < \infty$ then $\ell_{t+1} < \infty$), we show the conclusion. 
To this end, suppose $\lbrace \ell_0,\ldots,\ell_{t} \rbrace$ are finite and $\dot F(\theta_{\ell_t}) \neq 0$. 
For a contradiction, suppose $\theta_k = \theta_{\ell_t}$ for all $k \geq \ell_t$. 
Then, by Lemma \ref{result-compact-sets}, the compact sets $C_{k} \subseteq C_{\ell_t}$ for all $k \geq \ell_t$. 
By Algorithm \ref{alg:triggering-event-logic}, $\delta_k = (.5)^{k-\ell_t} \delta_{\ell_t} > 0$,
which implies that there exists a $k \geq \ell_t$ such that
\begin{equation}
\delta_k = (.5)^{k-\ell_t} \delta_{\ell_t} < \frac{2 (1 - \rho) }{ \mathcal L(C_{\ell_t}) \overline \alpha}.
\end{equation}
By Lemma \ref{result-sufficient-scaling} and $\dot F(\theta_k) = \dot F(\theta_{\ell_t}) \neq 0$, 
$\psi_{j_k}^k$ satisfies our descent condition.
Hence, $\theta_{k+1} = \psi_{j_k}^k \neq \theta_{\ell_t}$. 
Therefore, $\ell_{t+1} = k+1 < \infty$.
\qed
\end{proof}

\begin{corollary} \label{coro-decrease-objective}
  Under the conditions of Theorem \ref{result-accept-new-iterates}, for any fixed
  $T \in \mathbb{N}$, $F(\theta_T) \leq F(\theta_0)$. Moreover, if $\dot F(\theta_{0}) \not = 0$,
  then there exists a $T^* \in \mathbb{N}$ such that for all $t \geq T^*$,
  $F(\theta_t) < F(\theta_0)$.
\end{corollary}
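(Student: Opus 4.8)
The plan is to establish the two claims separately, using the first as a monotonicity backbone for the second. For the first claim I would prove the stronger statement that the outer-loop objective sequence is non-increasing, namely $F(\theta_{k+1}) \le F(\theta_k)$ for every $k+1 \in \mathbb{N}$, and then obtain $F(\theta_T) \le F(\theta_0)$ by a one-line induction. The monotonicity is read off from the two outcomes of a triggering event in Algorithm \ref{alg:triggering-event-logic}. If the proposed iterate $\psi_{j_k}^k$ is rejected, then $\theta_{k+1} = \theta_k$ and trivially $F(\theta_{k+1}) = F(\theta_k)$. If it is accepted, then the reject test failed, which means the strict descent $F(\psi_{j_k}^k) < F(\theta_k) - \rho \delta_k \alpha_0^k \inlinenorm{\dot F(\theta_k)}_2^2$ holds; since $\rho > 0$, $\delta_k > 0$, $\alpha_0^k \ge \underline\alpha > 0$ by Lemma \ref{result-bounded-step-size}, and $\inlinenorm{\dot F(\theta_k)}_2^2 \ge 0$, we get $F(\theta_{k+1}) = F(\psi_{j_k}^k) \le F(\theta_k)$.

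For the second claim, suppose $\dot F(\theta_0) \neq 0$. In the indexing of \eqref{eqn-iterate-subsequence}, $\ell_0 = 0 < \infty$ and $\dot F(\theta_{\ell_0}) = \dot F(\theta_0) \neq 0$, so Theorem \ref{result-accept-new-iterates} gives $\ell_1 < \infty$. By the definition of $\ell_1$, we have $\theta_k = \theta_0$ for $0 \le k < \ell_1$ while $\theta_{\ell_1} \neq \theta_0$; hence $\theta_{\ell_1}$ is an accepted iterate produced at outer-loop iteration $\ell_1 - 1$, at which stage $\theta_{\ell_1 - 1} = \theta_0$. The accepting branch then certifies
\begin{equation}
F(\theta_{\ell_1}) < F(\theta_{\ell_1-1}) - \rho \delta_{\ell_1-1}\alpha_0^{\ell_1-1}\inlinenorm{\dot F(\theta_{\ell_1-1})}_2^2 = F(\theta_0) - \rho \delta_{\ell_1-1}\alpha_0^{\ell_1-1}\inlinenorm{\dot F(\theta_0)}_2^2 .
\end{equation}
Because $\dot F(\theta_0) \neq 0$ and the remaining factors are strictly positive (using $\alpha_0^{\ell_1-1} \ge \underline\alpha$ and $\delta_{\ell_1-1} > 0$), this yields $F(\theta_{\ell_1}) < F(\theta_0)$.

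I would then set $T^* = \ell_1$ and conclude by the monotonicity of the first claim: for all $t \ge T^*$, $F(\theta_t) \le F(\theta_{\ell_1}) < F(\theta_0)$. The only substantive ingredient is Theorem \ref{result-accept-new-iterates}, which guarantees the finiteness $\ell_1 < \infty$ and hence the existence of the promised index; everything else is bookkeeping around the accept/reject logic of Algorithm \ref{alg:triggering-event-logic} together with the positive step-size floor of Lemma \ref{result-bounded-step-size}. The one detail to handle carefully is that the strict decrease at $\ell_1$ must be measured against $F(\theta_0)$, which is legitimate only because $\theta_{\ell_1-1} = \theta_0$ by the definition of $\ell_1$.
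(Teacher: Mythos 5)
Your proposal is correct. The paper states Corollary \ref{coro-decrease-objective} without an explicit proof, and your argument is exactly the intended one: monotonicity of $\lbrace F(\theta_k) : k+1 \in \mathbb{N}\rbrace$ read off from the accept/reject logic of Algorithm \ref{alg:triggering-event-logic} (rejection keeps the iterate, acceptance forces strict non-sequential Armijo descent with a non-negative subtracted term), combined with Theorem \ref{result-accept-new-iterates} to get $\ell_1 < \infty$ and the strict decrease $F(\theta_{\ell_1}) < F(\theta_0)$, which is legitimately measured against $F(\theta_0)$ because $\theta_{\ell_1 - 1} = \theta_0$ by \eqref{eqn-iterate-subsequence}.
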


As corollary \ref{coro-decrease-objective} illustrates, 
theorem \ref{result-accept-new-iterates} ensures that our method cannot experience
catastrophic divergence on functions $F(\theta)$ satisfying assumption
\ref{as:locally-lipschitz-cont}; thereby, avoiding the pitfalls of OFFO methods 
for this function class (see Proposition \ref{prop-catastrophic-divergence}). 

\subsection{Analysis of a Gradient Subsequence} \label{sec:analysis-grad-subsequence}
We now study the accepted iterates, $\{\theta_{\ell_t} : t + 1 \in \mathbb{N}\}$, 
to show that the gradient function evaluated along this sequence 
(or, along a subsequence) is well-behaved relative to 
the local Lipschitz rank. 

\begin{theorem} \label{result-zoutendjik}
Suppose \eqref{eqn-problem} satisfies assumptions \ref{as:lower-bounded} and 
\ref{as:locally-lipschitz-cont}, and is solved using algorithm \ref{alg:novel-step-size}  
initialized at $\theta_0 \in \mathbb{R}^n$. 
Let $\lbrace \ell_t : t+1 \in \mathbb{N} \rbrace$ be defined as in \eqref{eqn-iterate-subsequence}.
Then, one of the following occurs.
\begin{enumerate}
\item There exists a $t + 1 \in \mathbb{N}$ such that $\ell_t < \infty$ and $F(\theta_{\ell_t}) \leq F(\theta_0)$ and $\dot F(\theta_{\ell_t}) = 0$.
\item The elements of $\lbrace \ell_t: t+1 \in \mathbb{N} \rbrace$ are all finite, $\liminf \delta_k > 0$, and
\begin{equation}
  \lim_{k \to \infty} \inlinenorm{\dot F(\theta_k)}_2 = 0
\end{equation}

\item Let $\lbrace C_k : k+1 \in \mathbb{N} \rbrace$ be a sequence of compact sets as in Lemma \ref{result-compact-sets}.
The elements of $\lbrace \ell_t : t+1 \in\mathbb{N} \rbrace$ are all finite, $\liminf \delta_k = 0$, and there exists a subsequence $\mathcal T \subseteq \mathbb{N}$ such that
\begin{equation} \label{eqn-relative-growth-gradient-lipschitz}
  \sum_{t\in\mathcal T}
  \frac{\inlinenorm{\dot F(\theta_{\ell_{t-1}})}_2^2}{\mathcal L(C_{\ell_{t-1}})} < \infty.%
  \footnote{Alternative, we can write $\sum_{t \in \mathcal{T}} \delta_{\ell_t -1}
  \inlinenorm{\dot F(\theta_{\ell_{t-1}})}_2^2 < \infty,$ which is similar to many 
  convergence results for stochastic gradient methods.}
\end{equation}
\end{enumerate}
\end{theorem}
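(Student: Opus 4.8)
The plan is to recast the claim as a statement about the subsequence of \emph{accepted} iterates $\{\theta_{\ell_t}\}$ and then separate the three outcomes through two successive dichotomies. I would start with the dichotomy on whether the algorithm ever accepts a stationary iterate. If there is a finite $\ell_t$ with $\dot F(\theta_{\ell_t}) = 0$, then induction on Theorem \ref{result-accept-new-iterates}, started from $\ell_0 = 0$, makes every earlier $\ell_s$ finite, so $\ell_t < \infty$, and Corollary \ref{coro-decrease-objective} gives $F(\theta_{\ell_t}) \leq F(\theta_0)$; this is outcome (1). Otherwise $\dot F(\theta_{\ell_t}) \neq 0$ for every accepted iterate, and the same induction via Theorem \ref{result-accept-new-iterates} shows all $\ell_t$ are finite, so the accepted sequence is infinite.

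Next I would extract the Zoutendijk-type summability that drives both remaining outcomes. The acceptance realized at outer iteration $\ell_t - 1$ satisfies $\theta_{\ell_t - 1} = \theta_{\ell_{t-1}}$ (the iterate is constant across the intervening block of rejections) and $\theta_{\ell_t} = \psi_{j_{\ell_t - 1}}^{\ell_t - 1}$, so the descent condition checked in Algorithm \ref{alg:triggering-event-logic} reads $F(\theta_{\ell_t}) < F(\theta_{\ell_{t-1}}) - \rho \delta_{\ell_t - 1} \alpha_0^{\ell_t - 1} \inlinenorm{\dot F(\theta_{\ell_{t-1}})}_2^2$. Telescoping over $t$ and invoking the lower bound $F^*$ of Assumption \ref{as:lower-bounded} yields $\sum_{t \geq 1} \delta_{\ell_t - 1} \alpha_0^{\ell_t - 1} \inlinenorm{\dot F(\theta_{\ell_{t-1}})}_2^2 \leq (F(\theta_0) - F^*)/\rho < \infty$. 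Since $\alpha_0^{\ell_t - 1} \geq \underline\alpha = 10^{-16}$ by Lemma \ref{result-bounded-step-size}, I may drop the step-size factor at the cost of a constant and obtain $\sum_{t \geq 1} \delta_{\ell_t - 1} \inlinenorm{\dot F(\theta_{\ell_{t-1}})}_2^2 < \infty$.

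I would then split on $\liminf_k \delta_k$. If $\liminf_k \delta_k > 0$, then $\delta_{\ell_t - 1}$ is bounded below by a positive constant, so the previous sum forces $\sum_t \inlinenorm{\dot F(\theta_{\ell_{t-1}})}_2^2 < \infty$ and hence $\inlinenorm{\dot F(\theta_{\ell_t})}_2 \to 0$; because $\dot F$ is constant on each block and $\ell_t \to \infty$, the full sequence obeys $\inlinenorm{\dot F(\theta_k)}_2 \to 0$, giving outcome (2). If instead $\liminf_k \delta_k = 0$, I would let $\mathcal T$ be the set of $t$ for which the block $[\ell_{t-1}, \ell_t - 1]$ contains at least one rejection. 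Because $\delta_k$ only decreases through the halving on a rejection and is otherwise nondecreasing, $\liminf_k \delta_k = 0$ forces infinitely many rejections, so $\mathcal T$ is infinite. For $t \in \mathcal T$ the acceptance at $\ell_t - 1$ is immediately preceded by a rejection at $\ell_t - 2$; applying the contrapositive of Lemma \ref{result-sufficient-scaling} with $C = C_{\ell_t - 2}$ (valid since $\dot F(\theta_{\ell_t - 2}) = \dot F(\theta_{\ell_{t-1}}) \neq 0$) gives $\delta_{\ell_t - 2} \geq 2(1-\rho)/(\mathcal L(C_{\ell_t - 2}) \overline\alpha)$, and the nesting $C_{\ell_t - 2} \subseteq C_{\ell_{t-1}}$ from Lemma \ref{result-compact-sets} (with monotonicity of the Lipschitz rank) yields $\delta_{\ell_t - 1} = \tfrac12 \delta_{\ell_t - 2} \geq (1-\rho)/(\mathcal L(C_{\ell_{t-1}}) \overline\alpha)$, i.e.\ $1/\mathcal L(C_{\ell_{t-1}}) \leq \overline\alpha \delta_{\ell_t - 1}/(1-\rho)$. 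Substituting this into the summable bound produces outcome (3).

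The delicate step is the $\liminf_k \delta_k = 0$ branch: converting the summable quantity $\delta_{\ell_t - 1} \inlinenorm{\dot F}_2^2$ into the relative-growth quantity $\inlinenorm{\dot F}_2^2 / \mathcal L(C_{\ell_{t-1}})$ only succeeds on blocks where a rejection precedes the acceptance, since only then does the contrapositive of Lemma \ref{result-sufficient-scaling} make $\delta$ comparable to the Lipschitz threshold. The early-acceptance blocks cannot be controlled this way, which is exactly why a subsequence $\mathcal T$ rather than the full index set appears in the conclusion. The remaining ingredients — monotonicity of $\mathcal L(C_k)$ along a block from the nested sets of Lemma \ref{result-compact-sets}, the infinitude of $\mathcal T$, and the bookkeeping of $\underline\alpha$, $\overline\alpha$, and $\rho$ — are routine given the earlier results.
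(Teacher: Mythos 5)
Your proposal is correct and takes essentially the same route as the paper's proof: induction via Theorem \ref{result-accept-new-iterates} to separate Case 1 from the all-finite case, telescoping the non-sequential Armijo condition into a Zoutendijk-type bound, splitting on $\liminf_k \delta_k$, and defining the same subsequence $\mathcal{T}$ of blocks containing a rejection, handled through the contrapositive of Lemma \ref{result-sufficient-scaling}. Your explicit treatment of the rejection at outer iteration $\ell_t - 2$ (the halving of $\delta$ together with the nesting of the compact sets from Lemma \ref{result-compact-sets}) simply spells out a step the paper's proof states more tersely.
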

\begin{proof}
We show that either $\{\ell_t : t + 1 \in \mathbb{N}\}$ are all finite, or a stationary point is found by induction. 
As the inductive step is the same as the base case, we only show the inductive step.
(Hypothesis:) Suppose that for $t + 1 \in \mathbb{N}$, that $\{\ell_0, ..., \ell_t\}$ are all finite. 
Then, either $\dot F(\theta_{\ell_t}) = 0$ or $\dot F(\theta_{\ell_t}) \not= 0$.
\begin{enumerate}
  \item If $\dot F(\theta_{\ell_t}) = 0$, $\theta_{\ell_t}$ is a stationary point, $F(\theta_{\ell_t}) \leq F(\theta_0)$ and $\ell_{t+1} = \infty$ by definition of $\ell_t$
(see (\ref{eqn-iterate-subsequence})).
  \item If $\dot F(\theta_{\ell_t}) \neq 0$, then $\ell_{t+1} < \infty$ by  Theorem \ref{result-accept-new-iterates}.
\end{enumerate}
Thus, by induction either Case 1 of Lemma \ref{result-zoutendjik} occurs or the 
elements of $\lbrace \ell_t : t+1 \in \mathbb{N} \rbrace$ are all finite. 

When the elements of $\lbrace \ell_t : t+1 \in \mathbb{N} \rbrace$ are all finite,
we consider two cases: $\liminf_k \delta_k > 0$ or $\liminf \delta_k = 0$.
\begin{enumerate}
\item If $\liminf_k \delta_k > 0$, then $\exists \underline \delta > 0$ such that
  $\delta_{k+1} > \underline \delta$ for all $k + 1 \in \mathbb{N}$.
  By our descent condition,
  \begin{equation} \label{eqn-guaranteed-reduction}
    F(\theta_{\ell_{t}}) < F(\theta_{\ell_{t}-1}) - \rho \delta_{\ell_{t}-1} \alpha_0^{\ell_{t}-1} \inlinenorm{ \dot F(\theta_{\ell_{t}-1})}_2^2, ~\forall t \in \mathbb{N}.
  \end{equation}
  Substituting the lower bound $\underline \delta$ and $\underline \alpha$ (see Lemma \ref{result-bounded-step-size}), using $\theta_{\ell_t-1} = \theta_{\ell_{t-1}}$, and re-arranging, we obtain
  \begin{equation}
    \frac{F(\theta_{\ell_{t-1}}) - F(\theta_{\ell_t})}{\rho \underline\delta \underline \alpha} > \inlinenorm{ \dot F(\theta_{\ell_{t-1}})}_2^2.
  \end{equation}
  Telescoping the sequence,
  \begin{equation} \label{eqn-zoutendjik-adjustment-parameter-lower-bounded}
    \sum_{t=1}^\infty \inlinenorm{\dot F(\theta_{\ell_{t-1}})}_2^2 < \frac{F(\theta_0) - F_{l.b.}}{\rho \underline\delta \underline{\alpha}} < \infty,
  \end{equation}
  which implies $\lim_{t\to\infty} \inlinenorm{\dot F(\theta_{\ell_t})}_2 = 0$. As
  $\theta_k = \theta_{\ell_t}, ~\forall k \in [\ell_t, \ell_{t + 1}) \cap \mathbb{N}$, 
  this implies that $\lim_{k\to\infty} \inlinenorm{\dot F(\theta_k)}_2 = 0$.
\item Suppose $\liminf_k \delta_k = 0$.
For a contradiction, suppose $\cup_{k=0}^\infty C_k$ is bounded. 
Then, there exists a compact set $C$ such that $\cup_{k=0}^\infty C_k \subset C$. 
Then, by Lemma \ref{result-sufficient-scaling}, 
\begin{equation}
  0 < \min\left(\frac{(1 - \rho)}{\mathcal{L}(C)\overline{\alpha}}, \delta_0\right) \leq \delta_k,
\end{equation}
which is a contradiction. Hence, $\cup_{k=0}^\infty C_k$ is unbounded.\medskip 

We now define $\mathcal T \subset \mathbb{N}$ 
to be the set of all $t\in \mathcal{T}$ such that $\ell_{t} > \ell_{t-1} + 1$ (i.e., there was a rejected outer loop iterate).
This set must be non-empty and infinite since $\liminf_{k} \delta_k = 0$.
Furthermore, $\forall t \in \mathcal{T}, ~\delta_{\ell_{t}-1} > (1-\rho)/[\overline \alpha \mathcal{L}(C_{\ell_{t-1}})]$ by Lemma \ref{result-sufficient-scaling}.
Therefore, considering our descent condition again, substituting the lower bound on 
$\delta_{\ell_{t}-1}$, and substituting the lower bound on the step size,
\begin{equation}
  F(\theta_{\ell_{t}}) < F(\theta_{\ell_{t}-1}) - \rho \frac{\underline\alpha (1-\rho)}{\overline \alpha \mathcal{L}(C_{\ell_{t-1}})} \inlinenorm{ \dot F(\theta_{\ell_{t}-1})}_2^2, ~\forall t \in \mathcal{T}.
\end{equation}
Multiplying by the constant terms, and telescoping, we obtain
\begin{equation}
  \sum_{t\in\mathcal T} \frac{\inlinenorm{\dot F(\theta_{\ell_{t-1}})}_2^2}{\mathcal L(C_{\ell_{t-1}})} < \infty.
\end{equation}
\end{enumerate}
This finishes the proof.
\qed
\end{proof}

To understand the above result, we make two comments. 
First, informally, if $\lbrace \theta_k : k + 1 \in \mathbb{N} \rbrace$ is bounded, 
then $\inf_k \delta_k > 0$,
which implies that $\lim_{k \to \infty} \inlinenorm{\dot F(\theta_k)} = 0$ by
part 2 of theorem \ref{result-zoutendjik}. 
Formally,
\begin{corollary} \label{result-adjustment-parameter-lower-bound}
Suppose \eqref{eqn-problem} satisfies assumption
\ref{as:locally-lipschitz-cont}, and is solved using algorithm \ref{alg:novel-step-size}  
initialized at $\theta_0 \in \mathbb{R}^n$. 
Let $\lbrace \ell_t : t+1 \in \mathbb{N} \rbrace$ be defined as in \eqref{eqn-iterate-subsequence}.
If $\lbrace \theta_k : k + 1 \in\mathbb{N} \rbrace$ is bounded, then $\inf_k \delta_k > 0$. 
\end{corollary}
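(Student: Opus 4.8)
The plan is to recognize this corollary as essentially the contrapositive of a dichotomy already established inside the proof of Theorem \ref{result-zoutendjik}: there it was shown that if $\cup_{k=0}^\infty C_k$ is bounded then $\delta_k$ admits a uniform positive lower bound. Thus it suffices to show that boundedness of the iterate sequence $\{\theta_k : k+1 \in \mathbb{N}\}$ forces $\cup_{k=0}^\infty C_k$ to be bounded, and then to invoke this fact.

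First I would bound the radii of the sets $C_k$ uniformly in $k$. Recall from Lemma \ref{result-compact-sets} that $C_k = \{\psi : \inlinenorm{\psi - \theta_k}_2 \leq 10 + \delta_k \overline{\alpha}\, \mathcal{G}(\theta_k)\}$, where $\mathcal{G}(\theta) = \sup_{\psi \in \mathcal{B}(\theta)} \inlinenorm{\dot F(\psi)}_2$ and $\mathcal{B}(\theta)$ is the radius-$10$ ball about $\theta$. Three ingredients control this radius. The centers are bounded: by hypothesis there is $R > 0$ with $\inlinenorm{\theta_k}_2 \leq R$ for all $k$. The scalings are bounded: inspecting the $\delta$-updates in Algorithm \ref{alg:triggering-event-logic} (each candidate value is $0.5\delta_k$, $\delta_k$, or $\min\{1.5\delta_k, \overline{\delta}\}$) together with $\delta_0 \in (0, \overline{\delta}]$ gives $\delta_k \leq \overline{\delta}$ for every $k$ by induction. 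The gradient suprema are bounded: since each $\mathcal{B}(\theta_k)$ lies in the compact ball of radius $R+10$ about the origin, and $\dot F$ is continuous by Assumption \ref{as:locally-lipschitz-cont}, the quantity $M \defeq \sup_{\inlinenorm{\psi}_2 \leq R+10} \inlinenorm{\dot F(\psi)}_2$ is finite and $\mathcal{G}(\theta_k) \leq M$ for all $k$. Combining these, every $C_k$ has radius at most $10 + \overline{\delta}\,\overline{\alpha} M$ and is centered within radius $R$ of the origin, so $\cup_{k=0}^\infty C_k \subseteq C$ for the fixed compact set $C \defeq \{\psi : \inlinenorm{\psi}_2 \leq R + 10 + \overline{\delta}\,\overline{\alpha} M\}$.

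With $\cup_{k=0}^\infty C_k \subseteq C$ in hand, I would close the argument exactly as in case~3 of the proof of Theorem \ref{result-zoutendjik}. The scaling $\delta$ is decreased only by halving, and only at a rejected step; by Lemma \ref{result-sufficient-scaling}, a rejection at a non-stationary iterate forces $\delta_k \geq 2(1-\rho)/[\mathcal{L}(C)\overline{\alpha}]$, so each halving leaves $\delta$ no smaller than $(1-\rho)/[\mathcal{L}(C)\overline{\alpha}]$. Together with the initial value this yields
\begin{equation}
\delta_k \geq \min\left(\delta_0, \frac{1-\rho}{\mathcal{L}(C)\overline{\alpha}}\right) > 0, \qquad \forall\, k+1 \in \mathbb{N},
\end{equation}
and taking the infimum over $k$ gives $\inf_k \delta_k > 0$.

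The only substantive work is the uniform radius bound in the first step, where the three contributions to the radius of $C_k$ — the center $\theta_k$, the scaling $\delta_k$, and the gradient supremum $\mathcal{G}(\theta_k)$ — must each be controlled separately, the last relying on continuity of $\dot F$ over a compact enlargement of the (bounded) iterate set. The remaining step merely reuses the lower-bound mechanism already proven, so I do not anticipate further difficulty there.
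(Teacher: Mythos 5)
Your proposal is correct and follows essentially the same route as the paper's own proof: both use the boundedness of the iterates, the bound $\delta_k \leq \overline{\delta}$, and continuity of $\dot F$ over a compact enlargement of the iterate set to trap every $C_k$ inside one fixed compact set $C$, and then invoke Lemma \ref{result-sufficient-scaling} (via the halving mechanism of Algorithm \ref{alg:triggering-event-logic}) to conclude $\delta_k \geq \min\bigl(\delta_0, (1-\rho)/[\mathcal{L}(C)\overline{\alpha}]\bigr) > 0$. The only differences are cosmetic — you center your compact set at the origin while the paper takes a neighborhood of the closure of the enlarged iterate set, and you spell out the rejection-implies-large-$\delta$ induction that the paper compresses into a single citation of Lemma \ref{result-sufficient-scaling}.
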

\begin{proof}
  Let $\mathcal{B}$ denote the closure of a bounded set containing 
  $\lbrace \theta_k : k + 1 \in \mathbb{N} \rbrace \cup 
  \lbrace \psi : \exists k + 1 \in \mathbb{N}, ~\inlinenorm{\psi - \theta_k} \leq 10 \rbrace$. 
  Let $\mathcal{G} = \sup_{\psi \in \mathcal{B}} \inlinenorm{ \dot F(\psi) }_2$.
  Let $\lbrace C_k : k + 1 \in \mathbb{N} \rbrace$ be defined as in \eqref{eqn-compactset-outerloop}.
  Then, following the same reasoning as in the proof of Lemma \ref{result-compact-sets},
  for all $k + 1 \in \mathbb{N}$, $C_k \subseteq C$, where 
  \begin{equation}
    C := \lbrace \psi: \inf_{\phi \in \mathcal{B}} \norm{\psi - \phi}_2 \leq 
    10 + \overline\delta \overline\alpha \mathcal{G} \rbrace.
  \end{equation} 
  By Lemma \ref{result-sufficient-scaling}, for all $k + 1 \in \mathbb{N}$,
  \begin{equation}
    0 < \min\left( \frac{1-\rho}{\mathcal{L}(C)\overline{\alpha}}, \delta_0 \right) \leq \delta_k.
  \end{equation}
  \qed
\end{proof}

Second, \eqref{eqn-relative-growth-gradient-lipschitz} states that the norm of 
the gradient cannot outpace the local Lipschitz constant. 
Thus, for functions where this local constant can be controlled relative 
to the optimality gap or norm of the gradient squared, we can ensure 
that the iterates approach a region where the gradient is zero. 
This motivates developing control mechanisms for $\lbrace C_k \rbrace$ so as to
constrain the growth the local Lipschitz constants.
One possible avenue is by adaptively controlling the radius of the triggering
events,\footnote{A similar strategy for a specific class of linear regression problems
was considered in \cite{berahas2023nonuniformsmoothnessgradientdescent}.}
which would make the algorithm more reliable and potentially improve performance.
We leave such considerations to future work.

\section{Global and Local Convergence Rates}
\label{sec:convergence-rate}
We establish global and local rates of convergence for the gradient and objective 
when the iterates (eventually) remain bounded.
We can show that iterates remain bounded by considering one of two contexts.
\begin{enumerate}
    \item The first context is that the iterates enter a bounded level set of 
    the objective function---that is, for some $s \in \mathbb{R}$, there is an 
    iterate in the set $\lbrace \psi: F(\psi) \leq s \rbrace$, 
    where this set is bounded. 
    This is a textbook condition used for the global 
    convergence analysis of methods that make use of descent conditions
    \cite[Theorem 3.2]{nocedal2006Numerical}. 
    Here, we use it to make a stronger statement that algorithm will be stopped 
    at a near-stationary point in sub-linear time relative to the accepted outer loop 
    iterates (see Theorem \ref{thm-rate-on-gradient}).
    Our statement is more akin to more contemporary complexity results that 
    require the strong condition of global Lipschitz continuity (or its more 
    recent generalization) of the gradient function 
    (e.g., \cite[Ch. 2]{cartis2022Evaluationa}).
    \item The second context is that the iterates enter an ``isolated'' region 
    that satisfies a local Polyak-{\L}ojasiewicz (PL) condition. 
    The Polyak-{\L}ojasiewicz is typically assumed to hold globally in order to derive 
    global rate-of-convergence results (e.g., \cite{karimi2016linear,liu2022Loss}).
    In a recent excellent work, the PL condition is shown to hold locally for a broad 
    class of functions, which was further shown to ensure that the differential 
    equation formulation of gradient descent generated paths that would effectively 
    be trapped in this local region and converge to a stationary point 
    \cite{josz2023global}.
    We suggest an analogue of this condition that is appropriate for algorithms 
    that are discrete, and, in particular, appropriate for our method (%
    see Definition \ref{def-isolated-pl-region}). 
    We then use this condition to derive a local linear rate of convergence
    for our method (see Theorem \ref{thm-locally-pl-convergence-rate}).
\end{enumerate}

\begin{theorem} \label{thm-rate-on-gradient}
    Suppose \eqref{eqn-problem} satisfies assumptions \ref{as:lower-bounded} and 
    \ref{as:locally-lipschitz-cont}, and is solved using algorithm \ref{alg:novel-step-size}  
    initialized at $\theta_0 \in \mathbb{R}^n$. 
    Let $\lbrace \ell_t : t+1 \in \mathbb{N} \rbrace$ be defined as in \eqref{eqn-iterate-subsequence}.
    If there exists a $k + 1 \in \mathbb{N}$ such that $\theta_k$ is in a bounded 
    level set of the objective function, then either the algorithm terminates 
    in finite time at a stationary point or, $\forall T \in \mathbb{N}$,
    \begin{equation}
        \min_{t \in \lbrace 0,\ldots,T-1 \rbrace} \inlinenorm{\dot F(\theta_{\ell_t})}_2^2
        \leq \frac{F(\theta_0) - F_{l.b.}}{T\rho\underline{\delta}\underline{\alpha}}.
    \end{equation}
\end{theorem}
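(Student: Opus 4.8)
The plan is to reduce the statement to the trichotomy already established in Theorem \ref{result-zoutendjik} by first ruling out its third alternative. First I would observe that, by Corollary \ref{coro-decrease-objective}, the objective is non-increasing along the accepted iterates, so $F(\theta_{\ell_t}) \leq F(\theta_0)$ for every finite $\ell_t$. Combined with the hypothesis that some $\theta_k$ lies in a bounded level set $\lbrace \psi : F(\psi) \leq s \rbrace$, every accepted iterate with index at least $k$ stays inside this bounded set, while the finitely many earlier iterates form a bounded set as well. Since the full outer-loop sequence $\lbrace \theta_j : j+1 \in \mathbb{N} \rbrace$ takes values only among the accepted iterates $\lbrace \theta_{\ell_t} \rbrace$, it follows that $\lbrace \theta_j \rbrace$ is bounded.

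With boundedness in hand, I would invoke Corollary \ref{result-adjustment-parameter-lower-bound} to conclude $\inf_k \delta_k > 0$, and set $\underline{\delta} \defeq \inf_k \delta_k$ (while $\underline{\alpha} = 10^{-16}$ from Lemma \ref{result-bounded-step-size}). In particular $\liminf_k \delta_k > 0$, which excludes the third case of Theorem \ref{result-zoutendjik}. Hence exactly one of two things happens: either Case 1 occurs, in which some accepted iterate satisfies $\dot F(\theta_{\ell_t}) = 0$, so the outer-loop stopping test $\inlinenorm{\dot F(\theta_k)}_2 > \epsilon$ fails and the algorithm halts in finite time at a stationary point (the first alternative of the theorem); or Case 2 occurs, in which every $\ell_t$ is finite and $\liminf_k \delta_k > 0$, which is precisely the regime where I would establish the rate.

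In the Case 2 regime the telescoping bound \eqref{eqn-zoutendjik-adjustment-parameter-lower-bounded} already derived inside the proof of Theorem \ref{result-zoutendjik} applies verbatim; alternatively, I would re-run that argument starting from the descent inequality \eqref{eqn-guaranteed-reduction}, using $\theta_{\ell_t-1} = \theta_{\ell_{t-1}}$ together with the uniform lower bounds $\delta_{\ell_t-1} \geq \underline{\delta}$ and $\alpha_0^{\ell_t-1} \geq \underline{\alpha}$ to obtain $\rho\underline{\delta}\underline{\alpha}\inlinenorm{\dot F(\theta_{\ell_{t-1}})}_2^2 < F(\theta_{\ell_{t-1}}) - F(\theta_{\ell_t})$. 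Summing for $t = 1,\dots,T$ telescopes the right-hand side to $F(\theta_0) - F(\theta_{\ell_T}) \leq F(\theta_0) - F_{l.b.}$, using $\theta_{\ell_0} = \theta_0$ and $F \geq F_{l.b.}$ from Assumption \ref{as:lower-bounded}. Reindexing gives $\sum_{t=0}^{T-1} \inlinenorm{\dot F(\theta_{\ell_t})}_2^2 \leq (F(\theta_0) - F_{l.b.})/(\rho\underline{\delta}\underline{\alpha})$, and bounding the minimum of the $T$ nonnegative summands by their average yields the claimed inequality.

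The main obstacle I anticipate is not any single computation but correctly justifying the boundedness reduction: showing that \emph{eventual} entry into a bounded level set forces the \emph{entire} trajectory to be bounded, so that Corollary \ref{result-adjustment-parameter-lower-bound} produces a single uniform constant $\underline{\delta}$ valid from $k = 0$; and verifying that Case 1 of Theorem \ref{result-zoutendjik} genuinely corresponds to finite-time termination of the \emph{algorithm} rather than merely of the accepted-iterate subsequence. Once these points are settled, the rate follows immediately from the standard average-versus-minimum estimate.
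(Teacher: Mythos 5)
Your proposal is correct and follows essentially the same route as the paper's proof: boundedness of the iterates via the level-set hypothesis and the descent condition, then Corollary \ref{result-adjustment-parameter-lower-bound} to secure $\inf_k \delta_k > 0$, then the trichotomy of Theorem \ref{result-zoutendjik} with the telescoped bound \eqref{eqn-zoutendjik-adjustment-parameter-lower-bounded} and the standard minimum-versus-average estimate. You merely spell out details the paper leaves implicit (the finitely-many-early-iterates point, the identification of Case 1 with finite-time termination, and the final averaging step), which is fine.
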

\begin{proof}
    By our non-sequential Armijo condition, if there exists a $k+1 \in \mathbb{N}$ 
    such that $\theta_k$ is in a bounded level set of the objective, then 
    $\theta_{k'}$ must remain in this bounded level set for all $k' \geq k$. 
    Hence, $\lbrace \theta_k: k+1 \in \mathbb{N} \rbrace$ is bounded.
    By Theorem \ref{result-zoutendjik} and Corollary \ref{result-adjustment-parameter-lower-bound},
    either the method terminates in finite time at a stationary point or
    $\exists \underline \delta > 0$ such that $\delta_k \geq \underline \delta$
    for all $k+1 \in \mathbb{N}$. 
    In the second case, the result follows by \eqref{eqn-zoutendjik-adjustment-parameter-lower-bounded}.
    \qed
\end{proof}

This classical rate can additionally be improved when a minimizing set satisfies 
what we call an isolated, local PL condition.
Informally, our isolated, local PL condition is akin to the notion of an 
isolated, local minimizer, as seen by the example in Figure \ref{figure-isolated-pl}.
As seen in Figure \ref{figure-isolated-pl}, an isolated, local PL region is a 
compact region in which the objective function satisfies the PL condition, and 
is surrounded by a sufficiently large neighborhood in which the objective function 
is strictly larger than the objective function in the compact PL region. We make this
notion formal in definition \ref{def-isolated-pl-region}. 

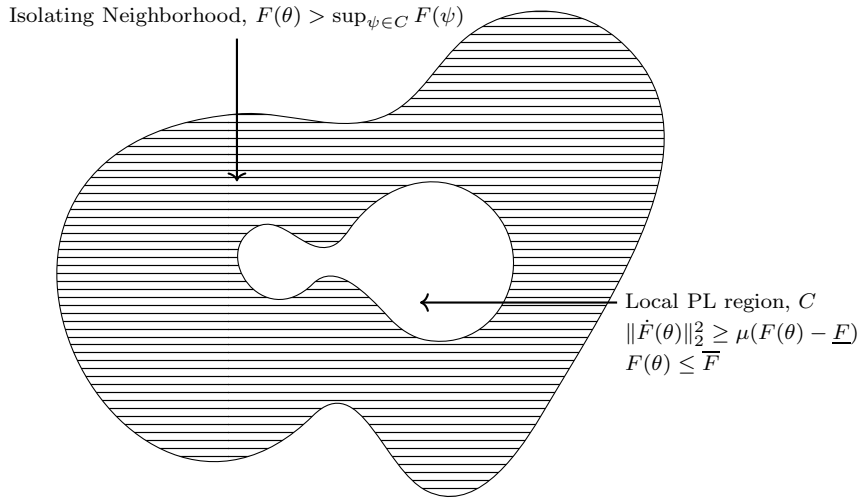
\begin{figure}[b!]
    \centering 
    \begin{tikzpicture}
    \coordinate (A) at (0,-2);
    \coordinate (B) at (1.5,-3);
    \coordinate (C) at (3,-2);
    \coordinate (D) at (4,3);
    \coordinate (E) at (2,3);
    \coordinate (F) at (1,2);
    \coordinate (G) at (-1,2);
    \draw[pattern=horizontal lines] (A) to [pattern=north east lines, closed, curve through = {(A) (B)  (C)  (D) (E)  (F) (G)}] (A);

    \coordinate (A1) at (0,-0.25);
    \coordinate (B1) at (1.0,-0.7);
    \coordinate (C1) at (1.5,-1);
    \coordinate (D1) at (2,1);
    \coordinate (E1) at (0.5, 0.5);
    \coordinate (F1) at (0.25,0.25);
    \coordinate (G1) at (-0.75,0.5);
    \draw[fill=white] (A1) to [closed, curve through ={(A1) (B1) (C1) (D1) (E1) (F1) (G1)}] (A1);

    \node (iso) at (-1,1) {};
    \node (isolab) at (-1,3) [anchor=south] {Isolating Neighborhood, $F(\theta) > \sup_{\psi \in C} F(\psi)$};
    \draw[->, thick] (isolab) to (iso); 

    \node (pl) at (1.3,-0.5) {};
    \node (pllab) at (4,-0.5) [anchor=west] {Local PL region, $C$};
    \node at (4,-0.9) [anchor=west] {$\inlinenorm{\dot F(\theta)}_2^2 \geq \mu(F(\theta) - \underline F)$};
    \node at (4,-1.3) [anchor=west] {$F(\theta) \leq \overline F$};
    \draw[->, thick] (pllab) to (pl);
\end{tikzpicture}
    \caption{A region satisfying  the isolated, local PL condition.
    \label{figure-isolated-pl}}
\end{figure}

\begin{definition} \label{def-isolated-pl-region}
    Let $F : \mathbb{R}^n \to \mathbb{R}$ be a differentiable function.
    Let $C \subset \mathbb{R}^n$ be compact. 
    Let $R > 0$.
    $C$ satisfies the isolated, local Polyak-{\L}ojasiewicz condition for 
    function $F$ of size $R$, if the following conditions are satisfied. 
    \begin{enumerate}
        \item $\exists \mu > 0$ for all $\theta \in C$ such that 
        \begin{equation}
            \norm{\dot F(\theta)}_2^2 \geq \mu (F(\theta) - \underline F),
        \end{equation}
        where $\underline F = \inf_{\psi \in C} F(\psi)$.
        \item For all $\theta \in \mathbb{R}^n$ in
        \begin{equation}
            \lbrace \theta \in \mathbb{R}^n : \mathrm{dist}(\theta, C) \leq R \rbrace 
            - C, 
        \end{equation}
        $F(\theta) > \sup_{\psi \in C} F(\psi)$, where $\mathrm{dist}(\theta,C)
        = \inf_{\psi \in C} \inlinenorm{\theta - \psi}_2$.
    \end{enumerate}
    If $C$ satisfies the isolated, local PL condition, then we refer to $C$ 
    as an isolated, local PL region. 
\end{definition}

Using this concept, we can prove a local rate-of-convergence result that is 
analogous to textbook local rate-of-convergence results for isolated minimizers
(e.g., see \cite[Theorems 3.5, 4.9, and 6.6]{nocedal2006Numerical}).
While there is fair criticism on the existence of an isolated, local PL region of
a given size (see the next result), we believe this is a useful foundation for future 
work that can tune the fixed algorithmic parameters in order 
to allow for a similar statement with an isolated, local PL region of any size.

\begin{theorem} \label{thm-locally-pl-convergence-rate}
    Suppose \eqref{eqn-problem} satisfies assumptions \ref{as:lower-bounded} and \ref{as:locally-lipschitz-cont}.
    Let $C \subset \mathbb{R}^n$ be compact, and let 
    $\mathcal{G} = \sup_{\theta \in C \cup \lbrace \psi : \inf_{\phi \in C} \inlinenorm{\psi - \phi}_2 \leq 10 \rbrace} \inlinenorm{\dot F(\theta)}_2$.
    Suppose $C$ is an isolated, local PL region of $F$ with size $10 + \overline{\alpha}\overline{\delta} \mathcal{G}$. 
    Suppose we solve \eqref{eqn-problem} with algorithm \ref{alg:novel-step-size} initialized at any $\theta_0 \in \mathbb{R}^n$.
    Let $\lbrace \ell_t : t + 1 \in \mathbb{N} \rbrace$ be defined as in \eqref{eqn-iterate-subsequence}.
    Suppose $\exists \tau \in \mathbb{N}\cup\lbrace 0 \rbrace$ such that $\theta_{\ell_\tau} \in C$. 
    Then, either the algorithm terminates in finite time at a stationary point in $C$, or, 
    $\forall t \geq \tau$, $\theta_{\ell_t} \in C$ and 
    \begin{equation} \label{eq-local-pl-rate}
        (F(\theta_{\ell_t}) - \underline{F}) \leq 
        (1 - \rho\underline{\delta}\underline{\alpha}\mu)^{t-\tau} (F(\theta_{\ell_{\tau}}) - \underline{F}),
    \end{equation}
    where $\underline F = \inf_{\theta \in C} F(\theta)$; and $\mu$ is the local PL constant 
    defined in Definition \ref{def-isolated-pl-region}.
\end{theorem}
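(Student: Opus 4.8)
The plan is to prove the result by induction on $t \geq \tau$, establishing simultaneously that each accepted iterate remains in $C$ and that the optimality gap contracts by the factor $(1 - \rho\underline{\delta}\underline{\alpha}\mu)$ per accepted iterate. The two ingredients are (i) an \emph{invariance} argument showing the accepted iterates cannot escape the local PL region $C$, and (ii) a one-step contraction obtained by feeding the PL inequality into the non-sequential Armijo descent condition. If at some accepted iterate $\dot F(\theta_{\ell_t}) = 0$ with $\theta_{\ell_t} \in C$, we land in the first alternative (termination at a stationary point in $C$); otherwise $\dot F(\theta_{\ell_t}) \neq 0$ and Theorem \ref{result-accept-new-iterates} guarantees $\ell_{t+1} < \infty$, so a genuine next accepted iterate exists and the induction can proceed.

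First I would handle invariance, which I expect to be the main obstacle. Suppose $\theta_{\ell_t} \in C$. The outer loop iterate is unchanged on $\{\ell_t, \ldots, \ell_{t+1}-1\}$, so the inner loop producing $\theta_{\ell_{t+1}}$ is initialized at $\theta_{\ell_t} \in C$; by Lemma \ref{result-compact-sets} together with the step-size bound $\overline{\alpha}$ (Lemma \ref{result-bounded-step-size}) and $\delta_k \leq \overline{\delta}$, every inner loop iterate lies in $C_{\ell_{t+1}-1}$, whose radius about $\theta_{\ell_t}$ is at most $10 + \overline{\delta}\,\overline{\alpha}\,\mathcal{G}$. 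This is exactly the size $R$ of the isolating neighborhood in the hypothesis, so $\theta_{\ell_{t+1}} \in \lbrace \psi : \mathrm{dist}(\psi, C) \leq R \rbrace$. Because $\theta_{\ell_{t+1}}$ is \emph{accepted}, the descent condition gives $F(\theta_{\ell_{t+1}}) < F(\theta_{\ell_t}) \leq \sup_{\psi \in C} F(\psi)$. Were $\theta_{\ell_{t+1}} \notin C$, the isolating property (Condition 2 of Definition \ref{def-isolated-pl-region}) would force $F(\theta_{\ell_{t+1}}) > \sup_{\psi \in C} F(\psi)$, a contradiction; hence $\theta_{\ell_{t+1}} \in C$, completing the inductive step for containment.

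Next I would fix the step-size scaling lower bound. Since every accepted iterate from $\tau$ onward lies in the bounded set $C$, and all associated inner loop iterates lie in the bounded enlargement $\lbrace \psi : \mathrm{dist}(\psi, C) \leq R \rbrace$, the reasoning of Corollary \ref{result-adjustment-parameter-lower-bound} (via Lemma \ref{result-sufficient-scaling}) applies to yield a uniform $\underline{\delta} > 0$ with $\delta_k \geq \underline{\delta}$ for all relevant $k$; intuitively, once $\delta_k$ drops below the sufficiency threshold of Lemma \ref{result-sufficient-scaling} the iterate must be accepted, so halving cannot continue indefinitely. Combining the acceptance descent inequality $F(\theta_{\ell_{t+1}}) < F(\theta_{\ell_t}) - \rho\,\delta_{\ell_{t+1}-1}\,\alpha_0^{\ell_{t+1}-1}\,\inlinenorm{\dot F(\theta_{\ell_t})}_2^2$ with the bounds $\delta_{\ell_{t+1}-1} \geq \underline{\delta}$ and $\alpha_0^{\ell_{t+1}-1} \geq \underline{\alpha}$ (Lemma \ref{result-bounded-step-size}), and then the PL inequality $\inlinenorm{\dot F(\theta_{\ell_t})}_2^2 \geq \mu(F(\theta_{\ell_t}) - \underline{F})$ (valid since $\theta_{\ell_t} \in C$), I obtain the one-step contraction $F(\theta_{\ell_{t+1}}) - \underline{F} \leq (1 - \rho\underline{\delta}\underline{\alpha}\mu)(F(\theta_{\ell_t}) - \underline{F})$. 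Iterating this from $\tau$ to $t$ yields the geometric rate \eqref{eq-local-pl-rate}. A minor point left to verify is that $1 - \rho\underline{\delta}\underline{\alpha}\mu \in [0,1)$ so the bound is meaningful, which follows since the guaranteed per-step decrease cannot exceed the gap $F(\theta_{\ell_t}) - \underline{F}$ itself.
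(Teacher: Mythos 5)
Your proof is correct and takes essentially the same route as the paper's: containment of the iterates in $C$ via the compact sets of Lemma \ref{result-compact-sets} together with the isolating property of Definition \ref{def-isolated-pl-region}, a uniform lower bound $\underline{\delta}$ on the scaling parameter via Corollary \ref{result-adjustment-parameter-lower-bound} (Lemma \ref{result-sufficient-scaling}), and the geometric contraction by feeding the PL inequality into the non-sequential Armijo condition \eqref{eqn-guaranteed-reduction}. The only cosmetic difference is that you run the invariance induction over accepted iterates (an accepted iterate satisfies descent, hence cannot lie in the isolating shell), whereas the paper inducts over all outer iterations $k \geq \ell_\tau$ and notes that any proposed iterate landing in the shell is necessarily rejected --- logically the same argument.
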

\begin{proof}
    Let $\mathcal{N} = \lbrace \theta: \mathrm{dist}(\theta, C) \leq 10 + 
    \overline{\alpha} \overline{\delta} \mathcal{G} \rbrace$.
    We first show that for all $k \geq \ell_{\tau}$, $\theta_{k} \in C$.
    The proof is by induction. The base is case is similar to the generalization 
    statement, so we show the generalization only.
    By the induction hypothesis, $\theta_k \in C$. Let $C_k$ be defined as in 
    \eqref{eqn-compactset-outerloop}. 
    Hence, $C_k \subseteq \mathcal{N}$; and, in particular, $\psi_{j_k}^k \in \mathcal{N}$.
    There are two cases. 
    \begin{enumerate}
        \item If $\psi_{j_k}^k \in \mathcal{N} - C$, then $F(\psi_{j_{k}^k}) 
        > \sup_{\theta \in C} F(\theta) \geq F(\theta_k)$. Hence, $\psi_{j_k}^k$ 
        would be rejected by our nonsequential Armijo condition and 
        $\theta_{k+1} = \theta_k \in C$. 
        \item If $\psi_{j_k}^k \in C$, then regardless of whether it is accepted 
        or rejected, $\theta_{k+1} \in C$. 
    \end{enumerate}
    
    We conclude that the iterates remain in $C$ and are thus bounded.
    By Corollary \ref{result-adjustment-parameter-lower-bound},
    $\exists \underline \delta > 0$ such that $\delta_k \geq \underline{\delta}$.
    Using Theorem \ref{result-zoutendjik}, either the algorithm terminates 
    in a finite time at a stationary point that must be in $C$ or 
    $\lim_{k} \inlinenorm{\dot F(\theta_k)}_2 = 0$. 
    In the latter case, by \eqref{eqn-guaranteed-reduction}, the isolated, 
    local PL condition on $C$, and Lemma \ref{result-bounded-step-size},
    \begin{equation}
        F(\theta_{\ell_{t+1}}) < F(\theta_{\ell_t}) 
            - \rho \underline \delta \underline \alpha \mu
            (F(\theta_{\ell_t}) - \underline{F}),~\forall t \geq \tau.
    \end{equation}
    Subtracting $\underline{F}$ from both sides of the inequality 
    and iterating yields the result for $t > \tau$, and when $t = \tau$
    the inequality follows trivially. 
    \qed
\end{proof}

\section{Numerical Experiments} 
\label{sec:experiments}
Here, we seek to compare the reliability of OFFO methods and our method,
and characterize the sensitivity of OFFO methods to their hyperparameters on a set of
quasi-likelihood estimation problems \cite{wedderburn1974quasilikelihood} 
that require numerical integration to evaluate the objective
and with locally Lipschitz continuous gradients. 
In particular, we define reliability and sensitivity in the context of our
experiment through the following questions.
\begin{enumerate}
    \item (Reliability) How often do OFFO methods and our method reach approximate 
    stationary points and/or decrease the objective across their hyperparameter ranges, 
    and problem configuration given a max number of iterations 
    and gradient tolerance stopping condition? 
    \item (Sensitivity) Does the probability that an OFFO method 
    reach an approximate stationary point and/or decrease the objective 
    differ across hyperparameter settings given
    a max number of iterations and gradient tolerance stopping condition?
\end{enumerate}

We seek to answer these questions through a comparison on a set of
quasi-likelihood problems for which the optimization problem takes the form
\begin{equation} \label{eqn-ql-objective}
    \min_{\theta \in \mathbb{R}^n} F(\theta) = 
    \min_{\theta \in \mathbb{R}^n} -\left(\sum_{i=1}^m \int_{0}^{g(x_i^\intercal \theta)} 
    \frac{y_i - \mu}{V(\mu)}d\mu \right),
\end{equation}
for functions $g : \mathbb{R} \to \mathbb{R}$ (i.e., the link function) 
and $V : \mathbb{R} \to \mathbb{R}_{>0}$ (i.e., the variance function)
\cite{wedderburn1974quasilikelihood,mccullagh1989glm}.
In this experiment, we fix the link function to be the logistic function
\begin{equation}
    g(\eta) = \frac{1}{1 + \exp(-\eta)},
\end{equation}
and select from four variance functions motivated by the literature (in all our experiments,
$p = 2.25$ below).
\begin{enumerate}
    \item $V_1(\mu) = 1 + \mu + \sin(2\pi\mu)$ \cite[Section 4: Case 2]{lanteri2023Designing}
    \item $V_2(\mu) = |\mu|^{2p} + 1$ (i.e., a variance stabilizing transformation)
    \item $V_3(\mu) = \exp(|\mu - 1|^{2p})$ \cite[Section 4: Case 1]{lanteri2023Designing}
    \item $V_4(\mu) = \log(|\mu - 1|^{2p} + 1) + 1$.
\end{enumerate}

For each link and variance function combination, we must decide the size of $\theta$ ($n$)
and how many ``observations'' are generated ($m$). We select a combination of these values from 
Table \ref{table:blocking-factor}.
\begin{table}[!h]
    \centering
    \caption{For each link and variance function combination, the number of observations ($m$) 
    and the number of parameters ($n$) are selected from the sets under Blocks.
    }
    \label{table:blocking-factor}
    \begin{tabular}{p{2in} | p{1in}}
        \toprule
        Blocking System & Blocks \\
        \midrule
        Number of observations ($m$) & $\lbrace 100, 1000 \rbrace$ \\
        Number of variables ($n$) & $\lbrace 10, 50, 100 \rbrace$\\
        \bottomrule
    \end{tabular}
\end{table}

Given this selection, to fully instantiate an objective function we must generate data for the problem; 
in particular, we require a ``true'' set of coefficients $\theta^* \in \mathbb{R}^n$, 
a matrix of ``observations'' $X \in \mathbb{R}^{m \times n}$, a vector of ``responses'' 
$Y \in \mathbb{R}^m$, and a list of starting points $\lbrace \theta^{(1)},...,\theta^{(10)} \rbrace$. 
Each of these components have data generating mechanisms outlined in Table \ref{table:data-generation}.

\begin{table}[!h]
    \centering
    \caption{Let $g:\mathbb{R}\to\mathbb{R}$ be the link function, and $V:\mathbb{R}\to\mathbb{R}$ be the
    variance function. Given $n$ and $m$ selected from the values in Table \ref{table:blocking-factor},
    we generate the data for each problem following the procedure outlined in the table.
    For $\mu \in \mathbb{R}^n$, $\Sigma \in \mathbb{R}^{n \times n}$, $\Sigma$ symmetric and p.s.d,
    and $a, b \in \mathbb{R},~b \geq a$,
    symbols $\mathcal{N}(\mu, \Sigma)$, $\mathcal{U}[a, b]$, $\mathrm{Arcsin}[0,1]$ stand for a
    normal distribution with mean $\mu$ and covariance matrix $\Sigma$, a uniform distribution
    over $[a,b]$, and the arcsin distribution on $[0,1]$, respectively. 
    We denote $\prod_{i=1}^n \mathcal{U}[a,b]$ as the product
    distribution (i.e., a vector of size $n$ where each entry is distributed as $\mathcal{U}[a,b]$).
    Finally, for each ``observation'' vector in $X$, the first entry is replaced with $1$ for the intercept.}
    \label{table:data-generation}
    \begin{tabular}{p{1in} | p{3in}} 
        \toprule
        Data & Data Generation Mechanism\\
        \midrule
        True Coefficients & $\mu\in\mathbb{R}^n, \theta^* \in \mathbb{R}^n,~\mu\sim\mathcal{N}(0, I),~\theta^* \sim \mathcal{N}(\mu, I)$\\
        Observations & $\forall i \in \{1,...,m\},~x_i \in \mathbb{R}^n,~x_i \sim \mathcal{N}(0, I)$, normalized by $\sqrt{n-1}$,
        and the first entry replaced with $1$\\
        Error & $\forall i \in \{1,...,m\}, ~\upsilon_i \sim \mathrm{Arcsin}[0,1],~\epsilon_i = (\upsilon_i - .5)/\sqrt{(1/8)}$.\\
        Responses & $\forall i \in \{1,...,m\}, ~y_i = g((\theta^*)^\intercal x_i) + V(g((\theta^*)^\intercal x_i))^{1/2} \epsilon_i$\\
        Starting Point & $\forall i \in \{1,...,10\},~\theta^{(i)} \in \mathbb{R}^n,~\theta^{(i)} \sim \prod_{i=1}^n \mathcal{U}[-10,10]$\\
        \bottomrule
    \end{tabular}
\end{table}

We now list the methods we include in our numerical experiment, 
their hyperparameters, and the allowable range for these hyperparameters
in Table \ref{table:algorithms}.

\begin{table}[!h]
    \centering
    \caption{List of algorithms included in the numerical experiment,
    their hyperparameters, and a list of allowable hyperparameters values.}
    \label{table:algorithms}
    \begin{tabular}{p{1.5in} | p{1in} | p{1.5in}}
        \toprule
        Algorithm & Hyperparameter & Hyperparameter List\\
        \midrule
        Fixed Step Size \cite{bertsekas1999} & Step size & $\lbrace 10^{-4}, 1, 2, 4, 6, 8, 10 \rbrace$ \\
        Diminishing Step Size \cite{patel2024Gradient,bertsekas1999} & Step size sequence and factor & $
        \frac{1}{2^{\lfloor \log_2(k/100) + 1 \rfloor}}$ $\lbrace 10^{-4}, 1, 2, 4, 6, 8, 10 \rbrace$\\
        Barzilai-Borwein \cite{barzilai1988Twopoint} & Long or short step, Initial step size & 
        $\lbrace \mathrm{long}, \mathrm{short} \rbrace$, $\lbrace 10^{-4}, 1, 2, 4, 6, 8, 10 \rbrace$\\
        Lipschitz Approximation \cite{malitsky2023adaptive,malitsky2020Adaptive} & Initial step size & $\lbrace 10^{-4}, 1, 2, 4, 6, 8, 10 \rbrace$\\
        Nesterov's Accelerated GD \cite{li2023Convex} & Step size & $\lbrace 10^{-4}, 1, 2, 4, 6, 8, 10 \rbrace$\\
        Weighted Norm Gradient \cite{wu2020Wngrad} & Initial step size & $\lbrace 10^{-4}, 1, 2, 4, 6, 8, 10 \rbrace$\\
        Our method (algorithm \ref{alg:novel-step-size}) & $\rho, \delta_0, \overline{\delta}$ & $10^{-4}, 1, 1$\\
        \bottomrule
    \end{tabular}
\end{table}

Finally, the experiment is conducted as follows; for each algorithm 
and hyperparameter selection, for each link, variance, and
blocking factor combination, we generate a problem and run the algorithm ten times starting
once from each of the starting points in $\lbrace \theta^{(1)},...,\theta^{(10)} \rbrace$.
Each algorithm is run until either $5{,}000$ iterations are reached, or the gradient falls
below $10^{-3}$. For each specified problem, specified algorithm, and starting point (i.e.,
one observational unit) an observation consists of the following: problem name
(i.e., what variance function is used), $n$, $m$, algorithm name, algorithm hyperparameter(s),
starting point number,
the starting objective value, the ending objective value, the starting gradient value, and
the ending gradient value. 

This experiment was conducted on the Statistics Computing Cluster at UW-Madison, and all
methods and experiments were implemented in Julia-v1.11.
For reproducibility and a more in-depth discussion on our experimental design,
see our experiment library,\footnote{https://github.com/numoptim/experiments-stepsizeselection} and
for implementations of all the methods see our optimization library.\footnote{https://github.com/numoptim/OptimizationMethods.jl}

\subsection{Results and Discussion}
Figure \ref{fig-perc-approx-station} contains plots of
the percentage of observations
that an approximate stationary point\footnote{
The final norm gradient is less than or equal to $10^{-3}$ and was a real 
number (was not NaN).} 
was found as a function of step size and paneled by each problem. 
As our method does not require a user-supplied step size, 
figure \ref{fig-perc-approx-station} shows our method as a horizontal dashed line.
As seen in figure \ref{fig-perc-approx-station}, our method has consistent performance 
across all problems, whereas this is not the case for OFFO methods: many more instances 
of OFFO solve attempts fail to find an approximate stationary point for 
quasi-likelihood problem with the variance function $V_2$. 
This behavior illustrates the sensitivity of choosing OFFO parameters to the problem,
whereas our method does not suffer from such sensitivity concerns.

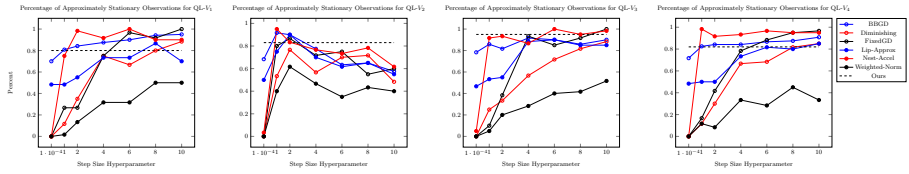
\begin{figure}[h!]
    \centering
    \begin{tikzpicture}[scale=.3]
    \begin{axis}[title=Percentage of Approximately Stationary Observations for QL-$V_1$,
        xlabel = Step Size Hyperparameter,
        ylabel = Percent,
        xtick={0.0001,1,2,4,6,8,10}]
        \addplot[mark=o, color=blue] 
            table[x="step.size.param", y="QLLOGSIN",col sep=comma]
            {sections/figures/data/perc_other_approx_station_BBGD.csv};

        \addplot[mark=o, color=red] 
            table[x="step.size.param", y="QLLOGSIN",col sep=comma]
            {sections/figures/data/perc_other_approx_station_DIMINISHING.csv};

        \addplot[mark=o, color=black] 
            table[x="step.size.param", y="QLLOGSIN",col sep=comma]
            {sections/figures/data/perc_other_approx_station_FIXEDGD.csv};

        \addplot[mark=*, color=blue] 
            table[x="step.size.param", y="QLLOGSIN",col sep=comma]
            {sections/figures/data/perc_other_approx_station_LIPAPPROX.csv};

        \addplot[mark=*, color=red] 
            table[x="step.size.param", y="QLLOGSIN",col sep=comma]
            {sections/figures/data/perc_other_approx_station_NESTACCEL.csv};

        \addplot[mark=*, color=black] 
            table[x="step.size.param", y="QLLOGSIN",col sep=comma]
            {sections/figures/data/perc_other_approx_station_WEIGHTNORMGD.csv};
        
        \addplot [domain=1e-4:10,
            thick,
            dashed,
            black,
            ]{.8};
    \end{axis}
\end{tikzpicture}
    \begin{tikzpicture}[scale=.3]
    \begin{axis}[title=Percentage of Approximately Stationary Observations for QL-$V_2$,
        xlabel = Step Size Hyperparameter,
        xtick={0.0001,1,2,4,6,8,10}]
        \addplot[mark=o, color=blue] 
            table[x="step.size.param", y="QLLOGMON",col sep=comma]
            {sections/figures/data/perc_other_approx_station_BBGD.csv};

        \addplot[mark=o, color=red] 
            table[x="step.size.param", y="QLLOGMON",col sep=comma]
            {sections/figures/data/perc_other_approx_station_DIMINISHING.csv};

        \addplot[mark=o, color=black] 
            table[x="step.size.param", y="QLLOGMON",col sep=comma]
            {sections/figures/data/perc_other_approx_station_FIXEDGD.csv};

        \addplot[mark=*, color=blue] 
            table[x="step.size.param", y="QLLOGMON",col sep=comma]
            {sections/figures/data/perc_other_approx_station_LIPAPPROX.csv};

        \addplot[mark=*, color=red] 
            table[x="step.size.param", y="QLLOGMON",col sep=comma]
            {sections/figures/data/perc_other_approx_station_NESTACCEL.csv};

        \addplot[mark=*, color=black] 
            table[x="step.size.param", y="QLLOGMON",col sep=comma]
            {sections/figures/data/perc_other_approx_station_WEIGHTNORMGD.csv};

        \addplot [domain=1e-4:10,
            thick,
            dashed,
            black,
            ]{.83};
    \end{axis}
\end{tikzpicture}
    \begin{tikzpicture}[scale = .3]
    \begin{axis}[title=Percentage of Approximately Stationary Observations for QL-$V_3$,
        xlabel = Step Size Hyperparameter,
        scaled x ticks=false,
        xtick={0.0001,1,2,4,6,8,10}]
        \addplot[mark=o, color=blue] 
            table[x="step.size.param", y="QLLOGCEXP",col sep=comma]
            {sections/figures/data/perc_other_approx_station_BBGD.csv};

        \addplot[mark=o, color=red] 
            table[x="step.size.param", y="QLLOGCEXP",col sep=comma]
            {sections/figures/data/perc_other_approx_station_DIMINISHING.csv};

        \addplot[mark=o, color=black] 
            table[x="step.size.param", y="QLLOGCEXP",col sep=comma]
            {sections/figures/data/perc_other_approx_station_FIXEDGD.csv};

        \addplot[mark=*, color=blue] 
            table[x="step.size.param", y="QLLOGCEXP",col sep=comma]
            {sections/figures/data/perc_other_approx_station_LIPAPPROX.csv};

        \addplot[mark=*, color=red] 
            table[x="step.size.param", y="QLLOGCEXP",col sep=comma]
            {sections/figures/data/perc_other_approx_station_NESTACCEL.csv};

        \addplot[mark=*, color=black] 
            table[x="step.size.param", y="QLLOGCEXP",col sep=comma]
            {sections/figures/data/perc_other_approx_station_WEIGHTNORMGD.csv};

        \addplot [domain=1e-4:10,
        thick,
        dashed,
        black,
        ]{.95};
    \end{axis}
\end{tikzpicture}
    \begin{tikzpicture}[scale=.3]
    \begin{axis}[legend pos=outer north east,
        title=Percentage of Approximately Stationary Observations for QL-$V_4$,
        xlabel = Step Size Hyperparameter,
        xtick={0.0001,1,2,4,6,8,10}]
        \addplot[mark=o, color=blue] 
            table[x="step.size.param", y="QLLOGCLOG",col sep=comma]
            {sections/figures/data/perc_other_approx_station_BBGD.csv};
        \addlegendentry{BBGD}

        \addplot[mark=o, color=red] 
            table[x="step.size.param", y="QLLOGCLOG",col sep=comma]
            {sections/figures/data/perc_other_approx_station_DIMINISHING.csv};
        \addlegendentry{Diminishing}

        \addplot[mark=o, color=black] 
            table[x="step.size.param", y="QLLOGCLOG",col sep=comma]
            {sections/figures/data/perc_other_approx_station_FIXEDGD.csv};
        \addlegendentry{FixedGD}

        \addplot[mark=*, color=blue] 
            table[x="step.size.param", y="QLLOGCLOG",col sep=comma]
            {sections/figures/data/perc_other_approx_station_LIPAPPROX.csv};
        \addlegendentry{Lip-Approx}

        \addplot[mark=*, color=red] 
            table[x="step.size.param", y="QLLOGCLOG",col sep=comma]
            {sections/figures/data/perc_other_approx_station_NESTACCEL.csv};
        \addlegendentry{Nest-Accel}

        \addplot[mark=*, color=black] 
            table[x="step.size.param", y="QLLOGCLOG",col sep=comma]
            {sections/figures/data/perc_other_approx_station_WEIGHTNORMGD.csv};
        \addlegendentry{Weighted-Norm}

        \addplot [domain=1e-4:10,
        thick,
        dashed,
        black,
        ]{.82};
        \addlegendentry{Ours}
    \end{axis}
\end{tikzpicture}
    \caption{
        Percentage of observations
        where the norm gradient at the terminal iterate was 
        a real number (was not NaN), and was smaller than or equal to $10^{-3}$ 
        for each algorithm, for each of their hyperparameter settings, paneled by
        the quasi-likelihood problem. 
        \label{fig-perc-approx-station}
    }
\end{figure}

Figure \ref{fig-perc-achieved-descent} shows the fraction of observations
where the terminal iterate's objective function value is smaller than the initial iterate's 
objective function value as a function of the step size and paneled by the quasi-likelihood 
problem. 
In each panel of figure \ref{fig-perc-achieved-descent}, our method appears as a horizontal 
dashed line as our method does not require a user-specified step size. 
Furthermore, as we would expect from our descent condition, our method consistently improves 
on the optimality gap. 
On the other hand, OFFO methods have mixed performance in improving the objective function,
which lends evidence that the anti-convergence behavior that we present in Section \ref{sec:counter}
is not solely of theoretical interest. 
In other words, OFFO methods seem to be less reliable for non-convex problems. 

\begin{figure}[h!]
    \centering
    \subfigure{\begin{tikzpicture}[scale=.3]
    \begin{axis}[title=Percentage of Observations that Achieved Descent for QL-$V_1$,
        xlabel = Step Size Hyperparameter,
        ylabel = Percent,
        xtick={0.0001,1,2,4,6,8,10}]
        \addplot[mark=o, color=blue] 
            table[x="step.size.param", y="QLLOGSIN",col sep=comma]
            {sections/figures/data/perc_other_achieved_descent_BBGD.csv};

        \addplot[mark=o, color=red] 
            table[x="step.size.param", y="QLLOGSIN",col sep=comma]
            {sections/figures/data/perc_other_achieved_descent_DIMINISHING.csv};

        \addplot[mark=o, color=black] 
            table[x="step.size.param", y="QLLOGSIN",col sep=comma]
            {sections/figures/data/perc_other_achieved_descent_FIXEDGD.csv};

        \addplot[mark=*, color=blue] 
            table[x="step.size.param", y="QLLOGSIN",col sep=comma]
            {sections/figures/data/perc_other_achieved_descent_LIPAPPROX.csv};

        \addplot[mark=*, color=red] 
            table[x="step.size.param", y="QLLOGSIN",col sep=comma]
            {sections/figures/data/perc_other_achieved_descent_NESTACCEL.csv};

        \addplot[mark=*, color=black] 
            table[x="step.size.param", y="QLLOGSIN",col sep=comma]
            {sections/figures/data/perc_other_achieved_descent_WEIGHTNORMGD.csv};
        
        \addplot [domain=1e-4:10,
            thick,
            dashed,
            black,
            ]{1.0};
    \end{axis}
\end{tikzpicture}}
    \subfigure{\begin{tikzpicture}[scale=.3]
    \begin{axis}[title=Percentage of Observations that Achieved Descent for QL-$V_2$,
        xlabel = Step Size Hyperparameter,
        xtick={0.0001,1,2,4,6,8,10}]
        \addplot[mark=o, color=blue] 
            table[x="step.size.param", y="QLLOGMON",col sep=comma]
            {sections/figures/data/perc_other_achieved_descent_BBGD.csv};

        \addplot[mark=o, color=red] 
            table[x="step.size.param", y="QLLOGMON",col sep=comma]
            {sections/figures/data/perc_other_achieved_descent_DIMINISHING.csv};

        \addplot[mark=o, color=black] 
            table[x="step.size.param", y="QLLOGMON",col sep=comma]
            {sections/figures/data/perc_other_achieved_descent_FIXEDGD.csv};

        \addplot[mark=*, color=blue] 
            table[x="step.size.param", y="QLLOGMON",col sep=comma]
            {sections/figures/data/perc_other_achieved_descent_LIPAPPROX.csv};

        \addplot[mark=*, color=red] 
            table[x="step.size.param", y="QLLOGMON",col sep=comma]
            {sections/figures/data/perc_other_achieved_descent_NESTACCEL.csv};

        \addplot[mark=*, color=black] 
            table[x="step.size.param", y="QLLOGMON",col sep=comma]
            {sections/figures/data/perc_other_achieved_descent_WEIGHTNORMGD.csv};

        \addplot [domain=1e-4:10,
            thick,
            dashed,
            black,
            ]{1.0};
    \end{axis}
\end{tikzpicture}}
    \subfigure{

\begin{tikzpicture}[scale = .3]
    \begin{axis}[title=Percentage of Observations that Achieved Descent for QL-$V_3$,
        xlabel = Step Size Hyperparameter,
        scaled x ticks=false,
        xtick={0.0001,1,2,4,6,8,10}]
        \addplot[mark=o, color=blue, xtick={0.0001,1,2,4,6,8,10}] 
            table[x="step.size.param", y="QLLOGCEXP",col sep=comma]
            {sections/figures/data/perc_other_achieved_descent_BBGD.csv};

        \addplot[mark=o, color=red] 
            table[x="step.size.param", y="QLLOGCEXP",col sep=comma]
            {sections/figures/data/perc_other_achieved_descent_DIMINISHING.csv};

        \addplot[mark=o, color=black] 
            table[x="step.size.param", y="QLLOGCEXP",col sep=comma]
            {sections/figures/data/perc_other_achieved_descent_FIXEDGD.csv};

        \addplot[mark=*, color=blue] 
            table[x="step.size.param", y="QLLOGCEXP",col sep=comma]
            {sections/figures/data/perc_other_achieved_descent_LIPAPPROX.csv};

        \addplot[mark=*, color=red] 
            table[x="step.size.param", y="QLLOGCEXP",col sep=comma]
            {sections/figures/data/perc_other_achieved_descent_NESTACCEL.csv};

        \addplot[mark=*, color=black] 
            table[x="step.size.param", y="QLLOGCEXP",col sep=comma]
            {sections/figures/data/perc_other_achieved_descent_WEIGHTNORMGD.csv};

        \addplot [domain=1e-4:10,
        thick,
        dashed,
        black,
        ]{1.0};
    \end{axis}
\end{tikzpicture}}
    \subfigure{\begin{tikzpicture}[scale=.3]
    \begin{axis}[legend pos=outer north east,
        title=Percentage of Observations that Achieved Descent for QL-$V_4$,
        xlabel = Step Size Hyperparameter,
        xtick={0.0001,1,2,4,6,8,10}]
        \addplot[mark=o, color=blue] 
            table[x="step.size.param", y="QLLOGCLOG",col sep=comma]
            {sections/figures/data/perc_other_achieved_descent_BBGD.csv};
        \addlegendentry{BBGD}

        \addplot[mark=o, color=red] 
            table[x="step.size.param", y="QLLOGCLOG",col sep=comma]
            {sections/figures/data/perc_other_achieved_descent_DIMINISHING.csv};
        \addlegendentry{Diminishing}

        \addplot[mark=o, color=black] 
            table[x="step.size.param", y="QLLOGCLOG",col sep=comma]
            {sections/figures/data/perc_other_achieved_descent_FIXEDGD.csv};
        \addlegendentry{FixedGD}

        \addplot[mark=*, color=blue] 
            table[x="step.size.param", y="QLLOGCLOG",col sep=comma]
            {sections/figures/data/perc_other_achieved_descent_LIPAPPROX.csv};
        \addlegendentry{Lip-Approx}

        \addplot[mark=*, color=red] 
            table[x="step.size.param", y="QLLOGCLOG",col sep=comma]
            {sections/figures/data/perc_other_achieved_descent_NESTACCEL.csv};
        \addlegendentry{Nest-Accel}

        \addplot[mark=*, color=black] 
            table[x="step.size.param", y="QLLOGCLOG",col sep=comma]
            {sections/figures/data/perc_other_achieved_descent_WEIGHTNORMGD.csv};
        \addlegendentry{Weighted-Norm}

        \addplot [domain=1e-4:10,
        thick,
        dashed,
        black,
        ]{1.0};
    \end{axis}
\end{tikzpicture}}
    \caption{
        After filtering out observations were the starting or ending objective value
        was NaN, we plot the percentage of remaining
        observations where the final objective value was smaller 
        than the starting objective value for 
        each algorithm, across their hyperparameter ranges, paneled by
        quasi-likelihood problem.
        \label{fig-perc-achieved-descent}
    }
\end{figure}
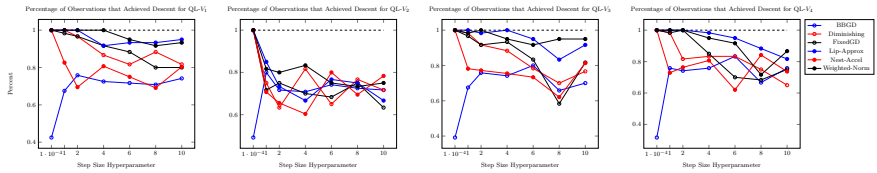

\section{Conclusion}            
\label{sec:conclusion}
In this work, we consider optimization problems with expensive objective function evaluations 
and relatively inexpensive gradient evaluations arising in a variety of applications.
We first consider OFFO methods for optimization problems with this computing pattern, 
but we show that these methods experience anti-convergence behavior 
in which they can generate an optimality gap that diverges on the problem class. 

We propose and analyze a novel first-order algorithm motivated by the limitation of OFFO 
methods with three critical features: event-based objective function evaluations;
a non-sequential Armijo condition; and a step size strategy that exploits the 
properties induced by our event-based objective function evaluations. 
We show that our method is more successful in finding approximate stationary points and 
reducing the objective function in comparison to OFFO methods on a set of 
quasi-likelihood objectives.

In subsequent work, we hope to generalize this framework,
extend it to the stochastic setting (both in gradient and objective cases), and
to other optimization schemes.


%
\section*{Conflict of interest}

The authors declare that they have no conflict of interest.

\bibliographystyle{spmpsci}      
\bibliography{stepsizeselection.bib}   

\end{document}